\numberwithin{equation}{section}
\newtheorem{thm}{Theorem}[section]
\newtheorem{prop}[thm]{Proposition}
\newtheorem{cor}[thm]{Corollary}
\newtheorem{lem}[thm]{Lemma}
\newtheorem{defn}[thm]{Definition}
\newcommand{\thmref}[1]{Theorem~\ref{#1}}
\newcommand{\propref}[1]{Proposition~\ref{#1}}
\newcommand{\lemref}[1]{Lemma~\ref{#1}}
\newcommand{\corref}[1]{Corollary~\ref{#1}}
\newcommand{\ie}{i.\hspace{.5pt}e.\ }
\newcommand{\tr}{\operatorname{tr}}
\newcommand{\R}{\mathbb{R}}
\newcommand{\M}{(\mathcal{M},\allowbreak{}\phi,\allowbreak{}\xi,\allowbreak{}\eta,g)}
\newcommand{\MM}{\mathcal{M}}
\newcommand{\Df}{\dot{D}}
\newcommand{\Tf}{\dot{T}}
\newcommand{\Rf}{\dot{R}}
\newcommand{\Qf}{\dot{Q}}
\newcommand{\HH}{\mathcal{H}}
\newcommand{\VV}{\mathcal{V}}
\newcommand{\GG}{(\mathcal{G},\ff,\allowbreak{}\xi,\allowbreak{}\eta,g)}
\newcommand{\Div}{\operatorname{div}}
\newcommand{\Span}{{\rm span}}
\newcommand{\ff}{\phi}
\newcommand{\F}{\mathcal{F}}
\newcommand{\n}{\nabla}
\newcommand{\N}{\widehat{N}}
\newcommand{\ta}{\theta}
\newcommand{\lm}{\lambda}
\newcommand{\om}{\omega}
\newcommand{\D}{\mathrm{d}}
\newcommand{\g}{\tilde{g}}
\newcommand{\sx}{\mathop{\mathfrak{S}}}
\begin{document}

\vspace{2cm}

\title[First natural connection on Riemannian $\Pi$-manifolds]
{First natural connection on Riemannian $\Pi$-manifolds}

\author{Hristo Manev}
\address{Medical University of Plovdiv, Faculty of Pharmacy,
Department of Medical Physics and Biophysics,   15-A Vasil Aprilov
Blvd.,   Plovdiv 4002,   Bulgaria;}
\email{hristo.manev@mu-plovdiv.bg}

\subjclass[2010]{53C25; 53D15; 53C50; 53B05; 53D35; 70G45}

\keywords{first natural connection, affine connection, natural connection, Riemannian $\Pi$-Manifolds}

\begin{abstract}
A natural connection with torsion is defined and it is called the first natural connection on Riemannian $\Pi$-manifold. Relations between the introduced connection and the Levi-Civita connection are obtained, as well as relations between their respective curvature tensors, torsion tensors, Ricci tensors, and scalar curvatures in the main classes of a classification of  Riemannian $\Pi$-manifolds are presented. An explicit example of dimension 5 is provided.
\end{abstract}
\maketitle

\section{Introduction}\label{sect-0}

In the present work we study the differential geometry of the almost paracontact almost paracomplex Riemannian manifolds, called briefly Riemannian $\Pi$-manifolds \cite{ManSta01,ManVes18}. The considered odd dimensional manifolds have traceless induced almost product structure on the paracontact distribution and the restriction on the paracontact distribution of the almost paracontact structure is an almost paracomplex structure. The start of the investigation of the Riemannian $\Pi$-manifolds is given in \cite{ManSta01} by the name almost paracontact Riemannian manifolds of type $(n,n)$. After that their study continued in series of works (e.g. \cite{ManVes18,IvMan2,HMan3,HM17}).

In \cite{ManSta01}, M. Manev and M. Staikova presented a classification of the Riemannian $\Pi$-ma\-ni\-folds with respect to the fundamental tensor $F$ which contains eleven basic classes. We consider four of these eleven basic classes, the so-called main classes, in which $F$ is expressed explicitly by the metrics and the Lee forms.

In differential geometry of manifolds with additional tensor structures important role play those affine connections which preserve the structure tensors and the metric, known also as natural connections (e.g. \cite{KobNom,Ale-Gan2,Gan-Mi,Mek-P-con,Man-Gri2,StaGri}).
We define a non-symmetric natural connection and we call it first natural connection on Riemannian $\Pi$-manifold. We obtain relations between the introduced connection and the Levi-Civita connection, as well as we study some of its curvature characteristics in the main classes.

The paper is structured as follows.
After this introductory Section 1, in Section 2, we recall some preliminary background facts about the considered geometry. In the next Section 3, we define the concept of natural connection on Riemannian $\Pi$-manifold and we prove a necessary and sufficient condition an affine connection to be natural.
Section 4 is devoted to the first natural connection on Riemannian $\Pi$-manifold and its relations with the Levi-Civita connection. Moreover, in this section we prove assertions for relations between these two connections and their respective curvature tensors, torsion tensors, Ricci tensors, and scalar curvatures.
In the final Section 5, we support the results made by an explicit example of dimension 5.

\section{Riemannian $\Pi$-Manifolds}\label{sect-1}

Let $\M$ be a {Riemannian $\Pi$-manifold}, where $\mathcal{M}$ is $(2n+1)$-di\-men\-sional differentiable manifold, equipped with a Rie\-mannian metric $g$ and a Riemannian $\Pi$-structure $(\ff,\xi,\eta)$. This structure consists a (1,1)-tensor field $\ff$, a Reeb vector field $\xi$ and its dual 1-form $\eta$. The following basic identities and their immediately derived properties are valid:
\begin{equation}\label{strM}
\begin{array}{c}
\ff\xi = 0,\qquad \ff^2 = I - \eta \otimes \xi,\qquad
\eta\circ\ff=0,\qquad \eta(\xi)=1,\\ \
\tr \ff=0,\qquad g(\ff x, \ff y) = g(x,y) - \eta(x)\eta(y),
\end{array}
\end{equation}
\begin{equation}\label{strM2}
\begin{array}{ll}
g(\ff x, y) = g(x,\ff y),\qquad &g(x, \xi) = \eta(x),
\\
g(\xi, \xi) = 1,\qquad &\eta(\n_x \xi) = 0,
\end{array}
\end{equation}
where $I$ and $\n$ denote the identity transformation on $T\mathcal{M}$ and the Levi-Civita connection of $g$, respectively (\cite{Sato76,ManVes18}).
Here and further, $x$, $y$, $z$, $w$ stand for arbitrary differentiable vector fields on $\mathcal{M}$ or tangent vectors at a point of $\MM$.

The associated metric $\g$ of $g$ on $\M$ is defined by $\g(x,y)=g(x,\ff y)+\eta(x)\eta(y)$. It is an indefinite metric of signature $(n + 1, n)$ and it is compatible with the manifold in the same way as $g$. In further investigations we use the following notations:
\begin{equation}\label{g***}
g^*(x,y)=g(x,\ff y), \qquad g^{**}(x,y)=g(\ff x,\ff y).
\end{equation}

Using $\xi$ and $\eta$ on an arbitrary Riemannian $\Pi$-manifold $\M$, we consider two complementary distributions of $T\MM$---the horizontal distribution $\HH=\ker(\eta)$ and the vertical distribution $\VV=\Span(\xi)$. They are mutually orthogonal with respect to the both metrics $g$ and $\tilde{g}$, \ie
\begin{equation}\label{HHVV}
\HH\oplus\VV =T\MM,\qquad \HH\;\bot\;\VV,\qquad \HH\cap\VV=\{o\},
\end{equation}
where $o$ stands for the zero vector field on $\MM$.
In this way the respective horizontal and vertical projectors are determined by $\mathrm{h}:T\MM\mapsto\HH$ and $\mathrm{v}:T\MM\mapsto\VV$.

An arbitrary vector field $x$ has corresponding projections $x^{\mathrm{h}}$ and $x^{\mathrm{v}}$ so that
\begin{equation}\label{hv}
x=x^{\mathrm{h}}+x^{\mathrm{v}},
\end{equation}
where
\begin{equation}\label{Xhv}
x^{\mathrm{h}}=\ff^2x, \qquad x^{\mathrm{v}}=\eta(x)\xi
\end{equation}
are the so-called horizontal and vertical component of $x$, respectively.

Let us denote by $\nabla$ the Levi-Civita connection of $g$. The following tensor field $F$ of type $(0,3)$ plays an important role in the geometry of the Riemannian $\Pi$-ma\-ni\-folds: \cite{ManSta01}
\begin{equation}\label{F}
F(x,y,z)=g\bigl( \left( \nabla_x \ff \right)y,z\bigr).
\end{equation}

From \eqref{strM} and \eqref{F} the following general properties of $F$ are obtained: \cite{ManSta01}
\begin{equation}\label{F-prop}
\begin{array}{l}
F(x,y,z)=F(x,z,y)=-F(x,\ff y,\ff z) +\eta(y)F(x,\xi,z)+\eta(z)F(x,y,\xi),\\[6pt]
F(x,y,\ff z)=-F(x,\ff y, z)+\eta(z)F(x,\ff y,\xi) +\eta(y)F(x,\ff z,\xi),\\[6pt]
F(x,\ff y,\ff z)=-F(x,\ff^2 y,\ff^2 z),\\[6pt]
F(x,\ff y,\ff ^2 z)=-F(x,\ff^2 y,\ff z).
\end{array}
\end{equation}

\begin{lem}[\cite{ManVes18}]\label{lem-F}
The following identities are valid:
\begin{enumerate}
\item[$1)$] $(\nabla_x \eta)(y)=g( \nabla_x \xi,y)$,
\item[$2)$] $\eta(\nabla_x \xi)=0$,
\item[$3)$] $F(x,\ff y,\xi)=-(\nabla_x \eta)(y)$.
\end{enumerate}
\end{lem}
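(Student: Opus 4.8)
The plan is to prove the three identities by direct computation, exploiting the compatibility of $\nabla$ with $g$ together with the structure relations \eqref{strM}--\eqref{strM2}. These are standard "bookkeeping" identities, so the work is short; the main thing is to apply $\nabla_x$ to the defining algebraic relations and then contract with $g$ appropriately.

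For part $1)$, I would start from the fact that $\eta(y) = g(y,\xi)$ by \eqref{strM2}. Differentiating along $x$ and using $\nabla g = 0$ gives
\begin{equation*}
(\nabla_x\eta)(y) = x\bigl(g(y,\xi)\bigr) - \eta(\nabla_x y) = g(\nabla_x y,\xi) + g(y,\nabla_x\xi) - g(\nabla_x y,\xi) = g(\nabla_x\xi,y),
\end{equation*}
which is exactly the claim. For part $2)$, I would differentiate $g(\xi,\xi)=1$ from \eqref{strM2}: since $\nabla g = 0$, we get $0 = x\bigl(g(\xi,\xi)\bigr) = 2g(\nabla_x\xi,\xi) = 2\eta(\nabla_x\xi)$ by \eqref{strM2} again, giving $\eta(\nabla_x\xi)=0$. (Alternatively one can cite that this is already recorded in \eqref{strM2} as the last listed identity, but deriving it keeps the lemma self-contained.)

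For part $3)$, the key is to relate $F(x,\ff y,\xi)$ to $(\nabla_x\ff)$ acting appropriately, and then use $\ff\xi = 0$. Using the symmetry $F(x,y,z)=F(x,z,y)$ from \eqref{F-prop}, write $F(x,\ff y,\xi) = F(x,\xi,\ff y) = g\bigl((\nabla_x\ff)\xi, \ff y\bigr)$. Now differentiate $\ff\xi = 0$ from \eqref{strM}: $(\nabla_x\ff)\xi + \ff(\nabla_x\xi) = 0$, so $(\nabla_x\ff)\xi = -\ff(\nabla_x\xi)$. Substituting and using the symmetry $g(\ff u,v) = g(u,\ff v)$ from \eqref{strM2} together with $\ff^2 = I - \eta\otimes\xi$ from \eqref{strM}:
\begin{equation*}
F(x,\ff y,\xi) = -g\bigl(\ff(\nabla_x\xi),\ff y\bigr) = -g\bigl(\ff^2(\nabla_x\xi), y\bigr) = -g(\nabla_x\xi,y) + \eta(\nabla_x\xi)\,\eta(y).
\end{equation*}
By part $2)$ the last term vanishes, and by part $1)$ the remaining term is $-(\nabla_x\eta)(y)$, which proves $3)$.

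I do not anticipate a genuine obstacle here, since each identity reduces to one differentiation of a structure relation plus a contraction. The only point requiring a little care is the order of arguments in $F$ and the placement of $\ff$: one must consistently use the symmetry in the last two slots of $F$ and the $g$-symmetry of $\ff$ from \eqref{strM2}, and one must remember to invoke parts $1)$ and $2)$ at the end of part $3)$ rather than trying to prove $3)$ in isolation. Thus the proof is best organized in the order $1), 2), 3)$ so that the later parts can use the earlier ones.
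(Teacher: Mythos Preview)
Your proof is correct and well-organized. Note, however, that the paper does not supply its own proof of this lemma: it is quoted from \cite{ManVes18}, so there is no in-paper argument to compare against. Your derivations of $1)$, $2)$, $3)$ are the standard ones and are exactly what one would expect in the cited reference; in particular, your use of the symmetry $F(x,y,z)=F(x,z,y)$ and the differentiation of $\ff\xi=0$ in part $3)$ is the natural route, and the dependence of $3)$ on $1)$ and $2)$ is handled correctly.
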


The 1-forms associated with $F$, known as Lee forms, are defined by:
\begin{equation*}\label{t}
\theta=g^{ij}F(e_i,e_j,\cdot),\quad
\theta^*=g^{ij}F(e_i,\ff e_j,\cdot), \quad \omega=F(\xi,\xi,\cdot),
\end{equation*}
where $\left(g^{ij}\right)$ is the inverse matrix of $\left(g_{ij}\right)$ of $g$ with respect to
a basis $\left\{\xi;e_i\right\}$ of $T_p\mathcal{M}$ $(i=1,2,\dots,2n; p\in \mathcal{M})$.
Using \eqref{F-prop}, the following relations for the Lee forms are obtained: \cite{ManSta01}
\begin{equation}\label{ta-prop}
\begin{array}{l}
\om(\xi)=0,\qquad \ta^*\circ\ff=-\ta\circ\ff^2,\qquad \ta^*\circ\ff^2=\ta\circ\ff.
\end{array}
\end{equation}

In \cite{ManSta01}, M. Manev and M. Staikova presented a classification of Riemannian $\Pi$-ma\-ni\-folds with respect to the fundamental tensor $F$ which contains eleven basic classes denoted by $\F_1$, $\F_2$, $\dots$, $\F_{11}$. The intersection of the basic classes is the special class $\F_0$ determined by the condition $F=0$. Let us remark that the main object of our considerations are the so-called main classes of the considered manifolds among the basic eleven. These are the classes $\F_1$, $\F_4$, $\F_5$, $\F_{11}$ in which the fundamental tensor $F$ is expressed explicitly by the metrics and the Lee forms. The characteristic conditions of these classes are: \cite{ManSta01,ManVes18}
\begin{equation}\label{Fcon}
\begin{split}
\F_{1}:\quad &F(x,y,z)=\dfrac{1}{2n}\bigl\{g(\ff x,\ff y)\ta(\ff^2 z)+g(\ff x,\ff z)\ta(\ff^2 y)\\
&\phantom{F(x,y,z)=\dfrac{1}{2n}\bigl\{}-g( x,\ff y)\ta(\ff z)-g( x,\ff z)\ta(\ff y)\bigr\};\\[6pt]
\F_{4}:\quad &F(x,y,z)=\dfrac{\ta(\xi)}{2n}\bigl\{g(\ff x,\ff y)\eta(z)+g(\ff x,\ff z)\eta(y)\bigr\};\\[6pt]
\F_{5}:\quad &F(x,y,z)=\dfrac{\ta^*(\xi)}{2n}\bigl\{g( x,\ff y)\eta(z)+g(x,\ff z)\eta(y)\bigr\};\\[6pt]
\F_{11}:\quad &F(x,y,z)=\eta(x)\left\{\eta(y)\om(z)+\eta(z)\om(y)\right\}.
\end{split}
\end{equation}

The $(1,2)$-tensors $N$ and $\N$ defined by
\begin{equation*}\label{N-nff}
\begin{split}
N(x,y)=&\left(\n_{\ff x}\ff\right)y-\ff\left(\n_{x}\ff\right)y-\left(\n_{x}\eta\right)(y)\xi\\[6pt]
& -\left(\n_{\ff
y}\ff\right)x+\ff\left(\n_{y}\ff\right)x+\left(\n_{y}\eta\right)(x)\xi,
\end{split}
\end{equation*}
\begin{equation*}\label{N1-nff}
\begin{split}
\N(x,y)=&\left(\n_{\ff x}\ff\right)y-\ff\left(\n_{x}\ff\right)y-\left(\n_{x}\eta\right)(y)\xi\\[6pt]
& +\left(\n_{\ff
y}\ff\right)x-\ff\left(\n_{y}\ff\right)x-\left(\n_{y}\eta\right)(x)\xi
\end{split}
\end{equation*}
are called Nijenhuis tensor and associated Nijenhuis tensor, respectively, for the $\Pi$-structure on $\MM$ \cite{ManVes18}.

It can be immediately established that we have an antisymmetric tensor $N$ and a symmetric $\N$, \ie
\begin{equation}\label{NN-prop}
\begin{array}{l}
N(x,y) = -N(y,x),\qquad
\N(x, y) = \N(y, x).
\end{array}
\end{equation}

The corresponding $(0,3)$-tensors of $N$ and $\N$ on $\M$ are denoted by the same letter and are expressed by means of $F$ through the equalities: \cite{ManVes18}
\begin{equation*}\label{NN1-F}
\begin{array}{ll}
N(x,y,z)=g\left(N(x,y),z\right)\\[6pt]
\phantom{N(x,y,z)}=F(\ff x,y,z)-F(\ff y,x,z)-F(x,y,\ff z)+F(y,x,\ff z)\\[6pt]
\phantom{N(x,y,z)=}+\eta(z)\left\{F(x,\ff y,\xi)-F(y,\ff
x,\xi)\right\},\\[6pt]
\N(x,y,z)=g\left(\N(x,y),z\right)\\[6pt]
\phantom{\N(x,y,z)}=F(\ff x,y,z)+F(\ff y,x,z)-F(x,y,\ff z)-F(y,x,\ff z)\\[6pt]
\phantom{\N(x,y,z)=}+\eta(z)\left\{F(x,\ff y,\xi)+F(y,\ff x,\xi)\right\}.
\end{array}
\end{equation*}

On the other hand the fundamental tensor $F$ of a Riemannian $\Pi$-manifold can be expressed only by the pair of tensors $N$ and $\N$ as follows: \cite{ManVes18}
\begin{equation}\label{F=NN}
\begin{array}{l}
F(x,y,z)=\dfrac14\bigl\{N(\ff x,y,z)+N(\ff x,z,y)+\N(\ff x,y,z)+\N(\ff x,z,y)\bigr\}\\[6pt]
\phantom{F(x,y,z)=}-\dfrac12\eta(x)\bigl\{N(\xi,y,\ff z)+\N(\xi,y,\ff z)+\eta(z)\N(\xi,\xi,\ff y)\bigr\}.
\end{array}
\end{equation}

Let $R$ denote the {curvature tensor} of type $(1,3)$ for the Levi-Civita connection $\n$ generated by the metric $g$ on $\M$, \ie
\begin{equation}\label{R_xyz}
R(x,y)z=\n_x \n_y z-\n_y \n_x z-\n_{[x,y]} z.
\end{equation}
Let us denote the corresponding curvature $(0,4)$-tensor by the same letter and let us define it by the following equality:
\begin{equation}\label{R_xyzw}
R(x,y,z,w)=g(R(x,y)z,w).
\end{equation}
The following known basic properties hold for $R$:
\begin{eqnarray}
  & R(x,y,z,w)=-R(y,x,z,w)=-R(x,y,w,z), \label{R-prop-1}
  \\[6pt]
  & R(x,y,z,w)+R(y,z,x,w)+R(z,x,y,w)=0\label{R-prop-2}.
\end{eqnarray}

For $R$ we define {Ricci tensor} $\rho$ of type $(0,2)$ as follows
\begin{equation}\label{ro}
\rho(x,y)=g^{ij}R(e_{i},x,y,e_{j}),
\end{equation}
and {scalar curvature} $\tau$ as the trace of $\rho$ through
\begin{equation}\label{tau}
\tau=g^{ij}\rho(e_{i},e_{j}).
\end{equation}

The {associated quantities} $\rho^*$ and $\tau^*$ corresponding to $\rho$ and $\tau$ are determined by the following equalities:
\begin{equation}\label{ro*-tau*}
\begin{array}{l}
\rho^*(x,y)=g^{ij}R(e_{i},x,y,\ff e_{j}),\qquad
\tau^*=g^{ij}\rho^*(e_{i},e_{j}).
\end{array}
\end{equation}

The notation $S \owedge P$ stands for the Kulkarni-Nomizu product of two tensors $S$ and $P$ of type (0,2), defined as follows:
\begin{equation}\label{KulNom}
\begin{split}
\left(S\owedge P\right)(x,y,z,w)=\,&S(x,z)P(y,w)-S(y,z)P(x,w)\\[6pt]
&+S(y,w)P(x,z)-S(x,w)P(y,z).
\end{split}
\end{equation}
It is easy to see that $S\owedge P$ possesses the basic properties \eqref{R-prop-1} and \eqref{R-prop-2} of $R$ just when $S$ and $P$ are symmetric tensors.

Let $T$ denote the {torsion tensor} of an arbitrary affine connection $D$, \ie
\begin{equation}\label{T-def}
T(x,y)=D_xy-D_yx-[x,y].
\end{equation}

Let us remark that $D$ is symmetric if and only if its torsion tensor $T$ is zero.

Let us denote by the same letter the corresponding $(0,3)$-tensor with respect to the metric $g$, \ie
\begin{equation}\label{D-T-03}
\begin{array}{l}
T(x,y,z)=g(T(x,y),z).
\end{array}
\end{equation}

{Torsion forms} $t$, $t^*$ and $\hat{t}$ of $T$ we call the associated 1-forms of $T$ defined by:
\begin{equation}\label{t}
\begin{array}{l}
t(x)=g^{ij}T(x,e_i,e_j),\qquad t^*(x)=g^{ij}T(x,e_i,\ff e_j),\qquad \hat{t}(x)=T(x,\xi,\xi)
\end{array}
\end{equation}
with with respect to a basis $\left\{\xi;e_i\right\}$ of $T_p\mathcal{M}$ $(i=1,2,\dots,2n; p\in \mathcal{M})$. Obviously, the identity $\hat{t}(\xi)=0$ holds.

\section{Natural connection on Riemannian $\Pi$-Manifolds}\label{sect-2}

Let us consider an arbitrary Riemannian $\Pi$-manifold $\M$.

\begin{defn}
An affine connection $D$ on a Riemannian $\Pi$-manifold $\M$ is called a {natural connection} for the Riemannian $\Pi$-structure $(\ff,\xi,\eta,g)$ if this structure is parallel with respect to $D$, \ie
\[
D\ff=D\xi=D\eta=Dg=0.
\]
\end{defn}
It is easily verified as a consequence that the associated metric $\tilde{g}$ is also parallel with respect to the natural connection $D$ on $\M$, \ie $D\tilde{g}=0$.

Therefore, $D$ on a Riemannian $\Pi$-manifold $\M\notin\F_0$ plays the same role as $\n$ on $\M\in\F_0$. Obviously, $D$ and $\n$ coincide when $\M\in\F_0$.

Let $Q$ denote the difference of $D$ and $\n$ which we call the {potential} of $D$ with respect to $\n$. Then we have
\begin{equation}\label{1}
D_xy=\n_xy+Q(x,y).
\end{equation}
Moreover, by the same letter we denote the corresponding $(0,3)$-tensor field of $Q$ with respect to $g$, \ie
\begin{equation}\label{2.2}
Q(x,y,z)=g\left(Q(x,y),z\right).
\end{equation}

\begin{prop}\label{thm-Q}
An affine connection $D$ is a natural connection on the Riemannian $\Pi$-manifold if and only if the following properties hold:
\begin{gather}
 Q(x,y,\ff z)-Q(x,\ff y,z)=F(x,y,z),\label{1a}
 \\%
 Q(x,y,z)=-Q(x,z,y).\label{1b}
\end{gather}
\end{prop}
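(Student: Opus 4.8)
The plan is to prove the two conditions are equivalent to naturality by translating each of the defining equations $D\ff=0$, $Dg=0$, $D\eta=0$, $D\xi=0$ into statements about the potential $Q$, and then checking that precisely \eqref{1a} and \eqref{1b} survive as independent requirements while the remaining ones follow automatically. First I would write out $Dg=0$ using \eqref{1}: since $\n g=0$, we get $(D_xg)(y,z)=-Q(x,y,z)-Q(x,z,y)$, so $Dg=0$ is exactly \eqref{1b}, the skew-symmetry of $Q$ in its last two arguments. Next I would compute $(D_x\ff)y$. From \eqref{1} we have $D_x(\ff y)-\ff(D_xy)=(\n_x\ff)y+Q(x,\ff y)-\ff Q(x,y)$, hence lowering an index with $g$ and using \eqref{F}, the condition $D\ff=0$ becomes $Q(x,\ff y,z)-Q(x,y,\ff z)=-F(x,y,z)$ — that is, \eqref{1a} up to the sign convention, which matches once one is careful that $g(\ff Q(x,y),z)=g(Q(x,y),\ff z)=Q(x,y,\ff z)$ by \eqref{strM2}.

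The remaining point is to show that \eqref{1a} and \eqref{1b} already force $D\xi=0$ and $D\eta=0$, so that these impose no new constraints; this is the step I expect to be the main obstacle, since it is where the special algebra of the $\Pi$-structure (the identities \eqref{strM}, \eqref{strM2} and the properties \eqref{F-prop} of $F$, together with \lemref{lem-F}) really gets used. The idea is that $D\eta=0$ and $D\xi=0$ are equivalent to each other given $Dg=0$ (since $\eta=g(\cdot,\xi)$), so it suffices to handle $D\xi=0$, i.e. $Q(x,\xi)=-\n_x\xi$, equivalently $Q(x,\xi,z)=-g(\n_x\xi,z)=-(\n_x\eta)(z)$ by part $1)$ of \lemref{lem-F}. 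I would derive this by specializing \eqref{1a}: putting $y=\xi$ and using $\ff\xi=0$ gives $Q(x,\xi,\ff z)=F(x,\xi,z)$; putting $z=\xi$ and using \eqref{1b} together with $\eta(Q(x,y))$-type reductions gives the $\eta$-components. Combining these with the first identity of \eqref{F-prop} (which relates $F(x,y,z)$ to $F(x,\ff y,\ff z)$ and the $\xi$-components) and with part $3)$ of \lemref{lem-F}, namely $F(x,\ff y,\xi)=-(\n_x\eta)(y)$, one recovers $Q(x,\xi,z)=-(\n_x\eta)(z)$ on all of $T\MM$ after decomposing $z=\ff^2z+\eta(z)\xi$ via \eqref{Xhv}.

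Conversely, once $Q$ satisfies \eqref{1a} and \eqref{1b}, defining $D$ by \eqref{1} and running the above computations backwards shows $Dg=D\ff=D\eta=D\xi=0$, hence $D$ is natural; parallelism of $\g$ is then the automatic consequence already noted after the definition. So the structure of the write-up is: (i) $Dg=0\Leftrightarrow$\eqref{1b}; (ii) given \eqref{1b}, $D\ff=0\Leftrightarrow$\eqref{1a}; (iii) \eqref{1a}$\wedge$\eqref{1b}$\Rightarrow D\xi=0\Rightarrow D\eta=0$, using \lemref{lem-F} and \eqref{F-prop}; (iv) assemble the two directions. The only genuinely delicate bookkeeping is in (iii), keeping track of horizontal versus vertical components and the sign conventions in \eqref{F-prop}; everything else is a direct unwinding of \eqref{1}, \eqref{F}, \eqref{2.2} and the compatibility identities \eqref{strM2}.
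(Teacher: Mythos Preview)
Your proposal is correct and follows essentially the same route as the paper: establish $Dg=0\Leftrightarrow\eqref{1b}$ and $D\ff=0\Leftrightarrow\eqref{1a}$ by direct computation from \eqref{1} and \eqref{F}, then argue that $D\xi=0$ (hence $D\eta=0$ via $\eta=g(\cdot,\xi)$) is forced by \eqref{1a} together with \lemref{lem-F}. The paper does exactly this, only in a slightly different order (it treats $D\ff$ before $Dg$) and is rather terse in the third step, simply asserting that the relation $Q(x,\xi,z)=-g(\n_x\xi,z)$ ``is a consequence of \eqref{1a}''; your plan to reach this by setting $y=\xi$ in \eqref{1a}, then passing from $\ff z$ to a general $z$ via $z=\ff^2 z+\eta(z)\xi$ and using $Q(x,\xi,\xi)=0$ from \eqref{1b}, is precisely the computation that fills in that line, so no genuinely different idea is involved.
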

\begin{proof}
Using \eqref{1} and \eqref{2.2}, we obtain the following relations
\begin{gather}
g(D_x \ff y,z)=g(\n_x \ff y,z)+ Q(x,\ff y,z),  \nonumber
\\[6pt]%
g(D_x y,\ff z)=g(\n_x y,\ff z)+ Q(x,y,\ff z).\nonumber
\end{gather}
We form the difference of the last two equalities and directly obtain the identity:
$$
\begin{array}{l}
g\bigl(\left(D_x \ff \right) y, z\bigr)=F(x, y, z)+ Q(x,\ff y,z)-Q(x,y,\ff z).
\end{array}
$$
Then the condition $D\ff=0$ is equivalent to \eqref{1a}.

We get sequentially
$$
\begin{array}{l}
\left(D_x g \right) (y, z)=g(\n_x y, z)+g(y,\n_x z)- g(D_x y,z)- g(y,D_x z)\\[6pt]
\phantom{\left(D_x g \right) (y, z)}=-Q(x,y,z)-Q(x,z,y).
\end{array}
$$
Therefore, the condition $Dg=0$ holds if and only if \eqref{1b} holds.

From \eqref{1} we obtain
\begin{equation}\label{Dxi=0}
\begin{array}{l}
g(D_x\xi,z)=g(\n_x\xi,z)+g(Q(x,\xi),z)=g(\n_x\xi,z)+Q(x,\xi,z).
\end{array}
\end{equation}
After that, from \lemref{lem-F} and \eqref{F-prop} we derive the following relation
\[
g(\n_x\xi,z)\allowbreak=F(x,\xi,\ff z).
\]
Substituting the latter result into \eqref{Dxi=0}, we get
\begin{equation*}
\begin{array}{l}
g(D_x\xi,z)=F(x,\xi,\ff z)+Q(x,\xi,z),
\end{array}
\end{equation*}
\ie the condition $D\xi=0$ is equivalent to the following relation
$$
F(x,\xi,\ff z)+Q(x,\xi,z)=0,
$$
which is a consequence of \eqref{1a}.

Since the relation $\eta(\cdot)=g(\cdot,\xi)$ holds, then, using $Dg=0$, we obtain that $D\xi=0$ is valid if and only if $D\eta=0$.
\end{proof}

\begin{thm}\label{thm-nat}
An affine connection $D$ is natural on a Riemannian $\Pi$-manifold if and only if
$$
D\ff=Dg=0.
$$
\end{thm}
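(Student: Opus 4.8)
The plan is to reduce the claimed equivalence to Proposition~\ref{thm-Q}. One direction is immediate: if $D$ is natural then by definition $D\ff=Dg=0$. For the converse, I assume $D\ff=Dg=0$ and must deduce $D\xi=D\eta=0$ as well. The first step is to translate the two hypotheses into statements about the potential $Q$ using the computations already performed in the proof of Proposition~\ref{thm-Q}: the condition $D\ff=0$ is equivalent to \eqref{1a}, and the condition $Dg=0$ is equivalent to \eqref{1b}. So under the hypotheses of the theorem, both \eqref{1a} and \eqref{1b} hold for $Q$.

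Next I would invoke Proposition~\ref{thm-Q} in the reverse direction: since $Q$ satisfies \eqref{1a} and \eqref{1b}, the proposition tells us that the affine connection $D$ determined by this $Q$ via \eqref{1} is a natural connection, i.e.\ $D\ff=D\xi=D\eta=Dg=0$. In particular $D\xi=0$ and $D\eta=0$, which completes the converse. Concretely, the chain internal to the proof of Proposition~\ref{thm-Q} already shows that $D\xi=0$ follows from \eqref{1a} (via the identity $g(\n_x\xi,z)=F(x,\xi,\ff z)$ coming from Lemma~\ref{lem-F} and \eqref{F-prop}), and that $D\eta=0$ is then equivalent to $D\xi=0$ given $Dg=0$; so one can either cite the proposition wholesale or reproduce just this short sub-argument.

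There is really no substantial obstacle here: the theorem is essentially a repackaging of Proposition~\ref{thm-Q}, whose proof already establishes that the conditions $D\eta=0$ and $D\xi=0$ are not independent of $D\ff=0$ and $Dg=0$. The only thing to be careful about is the logical bookkeeping — making explicit that the equivalences ``$D\ff=0 \Leftrightarrow$ \eqref{1a}'' and ``$Dg=0 \Leftrightarrow$ \eqref{1b}'' established inside the proof of the proposition do not themselves presuppose $D\xi=0$ or $D\eta=0$, so that one is genuinely allowed to start from just $D\ff=Dg=0$. Once that is noted, the theorem follows in two lines.
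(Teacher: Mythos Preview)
Your proposal is correct and takes essentially the same approach as the paper: both argue that the proof of \propref{thm-Q} already establishes the equivalences $D\ff=0 \Leftrightarrow$~\eqref{1a} and $Dg=0 \Leftrightarrow$~\eqref{1b} independently of the other two conditions, so that the theorem follows directly from the proposition. Your version is in fact more explicit than the paper's about the logical bookkeeping (that the two equivalences do not presuppose $D\xi=0$ or $D\eta=0$), which is a virtue.
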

\begin{proof}
In the proof of the preceding statement, we showed that the condition $D\ff=0$ is equivalent to \eqref{1a} and $Dg=0$ holds if and only if \eqref{1b} holds. In this way, according to \propref{thm-Q}, we complete the proof.
\end{proof}

\section{First natural connection on Riemannian $\Pi$-Manifolds}\label{sect-3}

Let $\Df$ denote an affine connection on $\M$ defined by
\begin{equation}\label{D1}
\begin{array}{l}
\Df_xy=\n_xy-\dfrac{1}{2}\bigl\{\left(\n_x\ff\right)\ff
y-\left(\n_x\eta\right)y\cdot\xi\bigr\}-\eta(y)\n_x\xi.
\end{array}
\end{equation}
Therefore, the potential $\Qf$ of $\Df$ with respect to $\n$ is defined by:
\begin{equation}\label{Q1}
\begin{array}{l}
\Qf(x,y)=-\dfrac{1}{2}\bigl\{\left(\n_x\ff\right)\ff
y-\left(\n_x\eta\right)y\cdot\xi\bigr\}-\eta(y)\n_x\xi.
\end{array}
\end{equation}

Using \eqref{strM}, \eqref{F} and \eqref{F-prop}, we verify that $\Df \ff=\Df g=0$.
Therefore, according to \thmref{thm-nat}, $\Df$ is a natural connection.

\begin{defn}\label{defn-D1}
The natural connection $\Df$, defined by \eqref{D1}, is called {first natural connection} on a Riemannian $\Pi$-manifold $\M$.
\end{defn}

Obviously, $\Df$ and $\n$ coincide only on a manifold of class $\F_0$. Therefore, $\n$ is a first natural connection when $\M \in \F_0$.

Let us remark that the restriction of $\Df$ on the paracontact distribution $\HH$ of $\M$ is another studied natural connection (called $P$-connection) on the corresponding Riemannian manifold equipped with an almost product structure (see e.g. \cite{Mek-P-con}).

\begin{thm}\label{thm-D1}
Let $\M$ be a $(2n+1)$-dimensional Riemannian $\Pi$-manifold belonging to the main classes $\F_i$ $(i=1,4,5,11)$. Then the first natural connection $\Df$ is determined by:
\begin{enumerate}
  \item If $\M \in \F_1$, then
  \[
  \begin{array}{l}
  \Df_xy=\n_xy-\dfrac{1}{4n}\left\{\ta(\ff y)\ff^2 x-\ta(\ff^2 y)\ff x+g(x,\ff y)\ff^2 \theta^\sharp - g(\ff x,\ff y)\ff \theta^\sharp \right\}, 
  \end{array}
  \]
  where $\ta(\cdot)=g(\theta^\sharp,\cdot)$;
  \item If $\M \in \F_4$, then
  \[\Df_xy=\n_xy-\dfrac{1}{2n}\ta(\xi)\left\{g(x,\ff y)\xi - \eta(y)\ff x\right\};\]
  \item If $\M \in \F_5$, then
  \[\Df_xy=\n_xy-\dfrac{1}{2n}\ta^*(\xi)\left\{g(\ff x,\ff y)\xi - \eta(y)\ff^2 x\right\};\]
  \item If $\M \in \F_{11}$, then
  \[
  \begin{array}{l}
  \Df_xy=\n_xy-\eta(x)\left\{\omega(\ff y)\xi - \eta(y)\ff \omega^\sharp\right\},
  \end{array}
  \]
  where $\omega(\cdot)=g(\omega^\sharp,\cdot)$.
\end{enumerate}
\end{thm}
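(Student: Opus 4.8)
The statement is a case-by-case specialization of the general formula \eqref{D1} for $\Df$, so the strategy is to substitute the explicit expressions for $F$ in each main class into the potential $\Qf$. The starting point is to rewrite \eqref{Q1} in terms of $F$ rather than $\n\ff$ and $\n\xi$. Using \eqref{F} we have $g\bigl((\n_x\ff)\ff y,z\bigr)=F(x,\ff y,z)$, and by \lemref{lem-F}(1),(3) together with \eqref{F-prop} we can express $(\n_x\eta)y=-F(x,\ff y,\xi)$ and $g(\n_x\xi,z)=F(x,\xi,\ff z)$. Hence the $(0,3)$-tensor of the potential becomes
\[
\Qf(x,y,z)=-\tfrac12\bigl\{F(x,\ff y,z)+F(x,\ff y,\xi)\eta(z)\bigr\}-\eta(y)F(x,\xi,\ff z).
\]
This identity (or an equivalent one derivable from \propref{thm-Q} with the symmetrization built into \eqref{1a}--\eqref{1b}) is the workhorse: once it is in hand, each case is a direct computation.

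For each of the four classes I would then plug the corresponding line of \eqref{Fcon} into this expression for $\Qf(x,y,z)$, simplify using the structural identities \eqref{strM}, \eqref{strM2}, the properties \eqref{F-prop}, \eqref{ta-prop}, and the splitting $x=\ff^2x+\eta(x)\xi$ where useful, and finally raise the index via $g$ to recover $\Qf(x,y)$ as a vector field. Concretely: in $\F_1$ one substitutes the $\tfrac1{2n}$-formula, notes that the terms $\ta(\ff^2 z)$, $\ta(\ff z)$ combine with the $g(\ff x,\ff y)$, $g(x,\ff y)$ factors, and reads off the coefficient $-\tfrac1{4n}$ after the $-\tfrac12$ prefactor; the result should be $\Qf(x,y)=-\tfrac1{4n}\{\ta(\ff y)\ff^2x-\ta(\ff^2y)\ff x+g(x,\ff y)\ff^2\Ta-g(\ff x,\ff y)\ff\Ta\}$ with $\ta(\cdot)=g(\Ta,\cdot)$. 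In $\F_4$ and $\F_5$ the tensor $F$ already carries an $\eta$, so the $\eta(y)F(x,\xi,\ff z)$ term contributes and one must combine it with the main $F(x,\ff y,z)$ term; the relations \eqref{ta-prop} ($\ta^*\circ\ff=-\ta\circ\ff^2$, etc.) and $\eta(\Ta)=\ta(\xi)$ are used to get the stated $\xi$- and $\ff x$- (resp.\ $\ff^2x$-) components. In $\F_{11}$, where $F(x,y,z)=\eta(x)\{\eta(y)\om(z)+\eta(z)\om(y)\}$, one computes $F(x,\ff y,z)=\eta(x)\eta(z)\om(\ff y)$ (since $\eta\circ\ff=0$), $F(x,\ff y,\xi)=\eta(x)\om(\ff y)$, and $F(x,\xi,\ff z)=\eta(x)\om(\ff z)$, then assembles $\Qf(x,y)=-\eta(x)\{\om(\ff y)\xi-\eta(y)\ff\om^\sharp\}$.

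The computations are all elementary once the $F$-form of $\Qf$ is established; the only place demanding care is the bookkeeping of the $\xi$-direction. In classes $\F_4,\F_5,\F_{11}$ the potential mixes horizontal and vertical pieces, and one has to check that the terms coming from $(\n_x\eta)y\cdot\xi$ and from $-\eta(y)\n_x\xi$ in \eqref{Q1} assemble correctly into the claimed $\{g(x,\ff y)\xi-\eta(y)\ff x\}$-type brackets rather than producing stray $\eta(z)$-terms. I expect this sign-and-projection bookkeeping — especially verifying that $F(x,\ff y,\xi)\eta(z)$ and $\eta(y)F(x,\xi,\ff z)$ recombine as advertised using $\ff^2=I-\eta\otimes\xi$ and \lemref{lem-F} — to be the main (though still routine) obstacle; the $\F_1$ case, by contrast, lives entirely on $\HH$ and is the most straightforward of the four.
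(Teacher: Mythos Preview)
Your approach is exactly the paper's: convert the potential \eqref{Q1} into a $(0,3)$-tensor expressed through $F$, then substitute the class-specific formulas \eqref{Fcon}. However, your workhorse identity carries a sign error in the last term. From $(\n_x\ff)\xi=-\ff(\n_x\xi)$ one gets $F(x,\xi,\ff z)=-g(\n_x\xi,\ff^2z)=-g(\n_x\xi,z)$, i.e.\ $g(\n_x\xi,z)=-F(x,\xi,\ff z)$, not $+F(x,\xi,\ff z)$ as you wrote (the displayed relation in the proof of \propref{thm-Q} is a typo). Consequently the correct formula is
\[
\Qf(x,y,z)=-\tfrac12\bigl\{F(x,\ff y,z)+\eta(z)F(x,\ff y,\xi)\bigr\}+\eta(y)F(x,\ff z,\xi),
\]
which is exactly what the paper uses. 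With your sign, the $\F_1$ case is unaffected (that term vanishes there), but in $\F_4$, $\F_5$, $\F_{11}$ you would not recover the stated potentials: for instance, your own $\F_{11}$ computation yields $\Qf(x,y,z)=-\eta(x)\eta(z)\om(\ff y)-\eta(x)\eta(y)\om(\ff z)$, which does \emph{not} equal $g\bigl(-\eta(x)\{\om(\ff y)\xi-\eta(y)\ff\om^\sharp\},z\bigr)$; the correct sign fixes this. Once that slip is repaired, the rest of your plan goes through verbatim and coincides with the paper's proof.
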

\begin{proof}
We present the proof of the theorem in the first considered case, \ie $\M \in \F_1$.

The potential $\Qf$ has the following form given in \eqref{Q1}:
\[
\Qf(x,y)=-\dfrac{1}{2}\bigl\{\left(\n_x\ff\right)\ff
y-\left(\n_x\eta\right)y\cdot\xi\bigr\}-\eta(y)\n_x\xi.
\]
Using \eqref{F}, \lemref{lem-F} and the analogous definitions of \eqref{1} and \eqref{2.2} for $\Qf$
\begin{equation}\label{1-D1}
\Df_xy=\n_xy+\Qf(x,y),
\end{equation}
\begin{equation}\label{2.2-D1}
\Qf(x,y,z)=g\left(\Qf(x,y),z\right),
\end{equation}
we obtain the corresponding form of $\Qf$ as a tensor of type $(0,3)$
\begin{equation*}\label{Q1-prop}
\begin{array}{l}
\Qf(x,y,z)=-\dfrac{1}{2}\bigl\{F(x,\ff y,z)+\eta(z)F(x,\ff y,\xi)\bigr\}+\eta(y)F(x,\ff z,\xi).
\end{array}
\end{equation*}

Applying the definition condition of $F$ in $\F_1$ from \eqref{Fcon} 
\begin{equation}\label{F1-prop}
\begin{array}{l}
F(x,y,z)=\dfrac{1}{2n}\bigl\{g(\ff x,\ff y)\ta (\ff ^{2}z) +g(\ff x,\ff z)\ta (\ff^{2}y)\\[15pt]
\phantom{F(x,y,z)=\dfrac{1}{2n}\bigl\{}
-g(x,\ff y)\ta (\ff z)-g(x,\ff z)\ta(\ff y)\bigr\}
\end{array}
\end{equation}
in the latter formula and using \eqref{strM} and \eqref{strM2}, we obtain
\begin{equation}\label{Q1-F1-xyz}
\begin{array}{l}
\Qf(x,y,z)=-\dfrac{1}{4n}\left\{
g(\ff x,\ff^2 y)\ta(\ff^2 z)-g(x,\ff^2 y)\ta(\ff z) \right.\\[6pt]
\phantom{\Qf(x,y,z)=-\dfrac{1}{4n}\left\{\right.}\left.
g(\ff x,\ff z)\ta(\ff y)-g(x,\ff z)\ta(\ff^2 y) \right\}.
\end{array}
\end{equation}
From the latter equality and \eqref{2.2-D1} we get
\begin{equation}\label{Q1-F1-xy}
\begin{array}{l}
\Qf(x,y)=-\dfrac{1}{4n}\left\{\ta(\ff y)\ff^2 x-\ta(\ff^2 y)\ff x +g(x,\ff y)\ff^2 \theta^\sharp - g(\ff x,\ff y)\ff \theta^\sharp \right\},
\end{array}
\end{equation}
where $\ta(\cdot)=g(\theta^\sharp,\cdot)$.

Thus, we establish the truthfulness of the first statement in the theorem, con\-si\-de\-ring \eqref{1-D1}.
The other cases are proved in a similar way.
\end{proof}

Let $\Tf$ denote the torsion tensor of $\Df$, i.\hspace{.5pt}e., according to \eqref{T-def}, we have
$$
\Tf(x,y)=\Df_xy-\Df_yx-[x,y].
$$
Then, using \eqref{D1}, we obtain
\begin{equation}\label{D1-T}
\begin{array}{l}
\Tf(x,y)=-\dfrac{1}{2}\left\{(\n_x\ff)\ff y-(\n_y\ff)\ff x -\D \eta (x,y)\xi\right\}+ \eta(x)\n_y \xi - \eta(y)\n_x \xi.
\end{array}
\end{equation}
Let us remark that $\Df$ is not a symmetric connection since obviously $\Tf$ is nonzero.

The corresponding $(0,3)$-tensor with respect to $g$ is determined as follows
\begin{equation}\label{D1-T-03}
\begin{array}{l}
\Tf(x,y,z)=g(\Tf(x,y),z).
\end{array}
\end{equation}
Then, by \eqref{D1-T}, \eqref{F} and \lemref{lem-F}, we get
\begin{equation}\label{D1-Txyz}
\begin{array}{l}
\Tf(x,y,z)=-\dfrac{1}{2}\left\{F(x,\ff y,z)-F(y,\ff x,z)\right\}\\[6pt]
\phantom{\Tf(x,y,z)=}-\dfrac{1}{2}\eta(z)\left\{F(x,\ff y,\xi)-F(y,\ff x,\xi)\right\}\\[6pt]
\phantom{\Tf(x,y,z)=}+\eta(y)F(x,\ff z,\xi)-\eta(x)F(y,\ff z,\xi).
\end{array}
\end{equation}

We apply \eqref{F=NN} in \eqref{D1-Txyz}. Thus, taking into account \eqref{NN-prop}, we obtain the form of the torsion of the first natural connection with respect to $N$ and $\N$:
\begin{equation}\label{T=NN}
\begin{array}{l}
\Tf(x,y,z)=-\dfrac{1}{8}\bigl\{2N(\ff x,\ff y,z)+N(\ff x,z,\ff y)-N(\ff y,z,\ff x)\\[6pt]
\phantom{\Tf(x,y,z)=-\dfrac{1}{8}\bigl\{}
+\N(\ff x,z,\ff y)-\N(\ff y,z,\ff x)\bigr\}\\[6pt]
\phantom{\Tf(x,y,z)=}
+\dfrac{1}{4}\eta(x)\bigl\{2N(\xi,\ff y,\ff z)-N(\ff y,\ff z,\xi)\\[6pt]
\phantom{\Tf(x,y,z)=+\dfrac{1}{4}\eta(x)\bigl\{}
+2\eta(z)\N(\xi,\xi,\ff^2 y)-\N(\ff y,\ff z,\xi)\bigr\}\\[6pt]
\phantom{\Tf(x,y,z)=}
-\dfrac{1}{4}\eta(y)\bigl\{2N(\xi,\ff x,\ff z)-N(\ff x,\ff z,\xi)\\[6pt]
\phantom{\Tf(x,y,z)=-\dfrac{1}{4}\eta(y)\bigl\{}
+2\eta(z)\N(\xi,\xi,\ff^2 x)-\N(\ff x,\ff z,\xi)\bigr\}\\[6pt]
\phantom{\Tf(x,y,z)=}
-\dfrac{1}{8}\eta(z)\bigl\{2N(\ff x,\ff y,\xi)+N(\ff x,\xi,\ff y)-N(\ff y,\xi,\ff x)\\[6pt]
\phantom{\Tf(x,y,z)=-\dfrac{1}{4}\eta(y)\bigl\{}
+\N(\ff x,\xi,\ff y)-\N(\ff y,\xi,\ff x)\bigr\}.
\end{array}
\end{equation}

We use \eqref{T=NN} and the decomposition in \eqref{HHVV}, \eqref{hv} and \eqref{Xhv} to obtain the following form of $\Tf$ regarding the pair $N$ and $\N$ with respect to the horizontal and the vertical components of the vector fields:
\begin{subequations}\label{T1Nhv}
\begin{equation*}
\begin{split}
\Tf(x,y,z) =-\dfrac{1}{8}\bigl\{
&\sx N(x^{\mathrm{h}},y^{\mathrm{h}},z^{\mathrm{h}}) + N(x^{\mathrm{h}},y^{\mathrm{h}},z^{\mathrm{h}}) \\[6pt]
&+ \N(y^{\mathrm{h}},z^{\mathrm{h}},x^{\mathrm{h}})-\N(z^{\mathrm{h}},x^{\mathrm{h}},y^{\mathrm{h}})\bigr\} \\[6pt]
-\dfrac{1}{4}\bigl\{
&2N(x^{\mathrm{h}},y^{\mathrm{h}},z^{\mathrm{v}})+N(y^{\mathrm{h}},z^{\mathrm{v}},x^{\mathrm{h}})
+N(z^{\mathrm{v}},x^{\mathrm{h}},y^{\mathrm{h}}) \\[6pt]
\end{split}
\end{equation*}
\begin{equation*}
\begin{split}
&+2N(x^{\mathrm{v}},y^{\mathrm{h}},z^{\mathrm{h}}) + N(y^{\mathrm{h}},z^{\mathrm{h}},x^{\mathrm{v}})
+2N(x^{\mathrm{h}},y^{\mathrm{v}},z^{\mathrm{h}})  \\[6pt]
&+ N(z^{\mathrm{h}},x^{\mathrm{h}},y^{\mathrm{v}})+2\N(y^{\mathrm{h}},z^{\mathrm{h}},x^{\mathrm{v}})
-\N(z^{\mathrm{v}},x^{\mathrm{h}},y^{\mathrm{h}}) \\[6pt]
&-\N(z^{\mathrm{h}},x^{\mathrm{h}},y^{\mathrm{v}})-2\N(z^{\mathrm{v}},x^{\mathrm{v}},y^{\mathrm{h}})
+ 2\N(y^{\mathrm{v}},z^{\mathrm{v}},x^{\mathrm{h}})\bigr\},
\end{split}
\end{equation*}
\end{subequations}
where $\sx$ stands for the cyclic sum by the three arguments.

\begin{thm}\label{thm-T1}
Let $\M$ be a $(2n+1)$-dimensional Riemannian $\Pi$-manifold belonging to the main classes $\F_i$ $(i=1,4,5,11)$. Then the torsion tensor $\Tf$ of the first natural connection $\Df$ has the form:
\begin{enumerate}
  \item If $\M \in \F_1$, then
  \[
  \begin{array}{l}
  \Tf(x,y)=-\dfrac{1}{4n}\left\{\ta(\ff y)\ff^2 x-\ta(\ff x)\ff^2 y
    +\ta(\ff^2 x)\ff y-\ta(\ff^2 y)\ff x\right\};
  \end{array}
  \]
  \item If $\M \in \F_4$, then
  \[\Tf(x,y)=\dfrac{1}{2n}\ta(\xi)\left\{\eta(y)\ff x-\eta(x)\ff y\right\};\]
  \item If $\M \in \F_5$, then
  \[\Tf(x,y)=\dfrac{1}{2n}\ta^*(\xi)\left\{\eta(y)\ff^2 x-\eta(x)\ff^2 y\right\};\]
  \item If $\M \in \F_{11}$, then
  \[\Tf(x,y)=\left\{\eta(y)\omega(\ff x) - \eta(x)\omega(\ff y)\right\}\xi.\]
\end{enumerate}
\end{thm}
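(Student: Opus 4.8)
The plan is to specialize the general torsion formula \eqref{D1-Txyz} to each of the four main classes by substituting the explicit expression for $F$ from \eqref{Fcon}. Since by \thmref{thm-D1} we already have the potential $\Qf$ written out in each class, an even more direct route is available: compute $\Tf(x,y)=\Qf(x,y)-\Qf(y,x)$ using $\Tf(x,y)=\Df_xy-\Df_yx-[x,y]$ together with the symmetry of the Levi-Civita connection, so that the $\n$-terms and the bracket cancel and only the antisymmetrization of the potential survives. I would present the class $\F_1$ case in detail and remark that the other three are analogous.

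For $\M\in\F_1$, I would start from the formula $\Df_xy=\n_xy-\frac{1}{4n}\left\{\ta(\ff y)\ff^2 x-\ta(\ff^2 y)\ff x+g(x,\ff y)\ff^2\theta^\sharp-g(\ff x,\ff y)\ff\theta^\sharp\right\}$ established in \thmref{thm-D1}(1). Subtracting the same expression with $x$ and $y$ swapped, the terms $g(x,\ff y)\ff^2\theta^\sharp$ and $g(\ff x,\ff y)\ff\theta^\sharp$ are symmetric in $x,y$ by \eqref{strM2} and \eqref{strM} respectively, hence drop out. What remains is $\Tf(x,y)=-\frac{1}{4n}\left\{\ta(\ff y)\ff^2 x-\ta(\ff^2 y)\ff x-\ta(\ff x)\ff^2 y+\ta(\ff^2 x)\ff y\right\}$, which is exactly the claimed expression after reordering. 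The cases $\F_4$, $\F_5$, $\F_{11}$ proceed identically: in $\F_4$ the term $g(x,\ff y)\xi$ is symmetric and cancels, leaving $\frac{1}{2n}\ta(\xi)\{\eta(y)\ff x-\eta(x)\ff y\}$; in $\F_5$ the term $g(\ff x,\ff y)\xi$ cancels, leaving $\frac{1}{2n}\ta^*(\xi)\{\eta(y)\ff^2 x-\eta(x)\ff^2 y\}$; in $\F_{11}$ one uses $\eta(x)\eta(y)\ff\omega^\sharp$ symmetric to get $\{\eta(y)\om(\ff x)-\eta(x)\om(\ff y)\}\xi$, noting $\eta(x)\om(\ff y)\xi$ need not vanish since the two $\eta$-factors attach to different arguments.

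Alternatively, working directly from \eqref{D1-Txyz} as a $(0,3)$-tensor, I would substitute each characteristic condition from \eqref{Fcon} and simplify. Here one must be attentive to the last two terms $\eta(y)F(x,\ff z,\xi)-\eta(x)F(y,\ff z,\xi)$ and to the $\eta(z)$-terms in the middle line: in $\F_1$, $F(x,\ff z,\xi)$ involves $g(\ff x,\ff\cdot)\ta(\ff^2\xi)$-type expressions that vanish because $\ff\xi=0$ and $\ta(\ff\xi)=0$ by \eqref{ta-prop}; in $\F_4$ and $\F_5$ these $\xi$-slot evaluations survive and feed into the answer; in $\F_{11}$ the structure $F(x,y,z)=\eta(x)\{\eta(y)\om(z)+\eta(z)\om(y)\}$ must be carefully tracked through all three groups of terms. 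Either route is essentially bookkeeping with the identities \eqref{strM}, \eqref{strM2}, \eqref{ta-prop} and \lemref{lem-F}.

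The main obstacle, such as it is, is purely organizational: ensuring that the many $\eta$-weighted correction terms in \eqref{D1-Txyz} are handled consistently in each class, in particular not prematurely discarding $F(\cdot,\cdot,\xi)$-contributions in $\F_4$ and $\F_5$ where they are nonzero, and correctly distinguishing in $\F_{11}$ between the terms that are symmetric under $x\leftrightarrow y$ (which cancel) and those that are not. No conceptual difficulty arises; the result follows by direct computation from \thmref{thm-D1} and the structure identities.
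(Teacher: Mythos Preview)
Your proposal is correct. Your primary route---writing $\Tf(x,y)=\Qf(x,y)-\Qf(y,x)$ and reading $\Qf$ off directly from \thmref{thm-D1}---is a slightly different and somewhat cleaner path than the paper's. The paper instead substitutes the class-defining expression for $F$ from \eqref{Fcon} into the general $(0,3)$-torsion formula \eqref{D1-Txyz}, simplifies using \eqref{strM} and \eqref{strM2}, and then raises the last index via \eqref{D1-T-03}; this is exactly the alternative you outline in your third paragraph. Your antisymmetrization approach has the advantage of reusing the work already done in \thmref{thm-D1} and of making the cancellations (the $g(x,\ff y)$- and $g(\ff x,\ff y)$-symmetric pieces) transparent in one line, whereas the paper's approach is self-contained in the sense that it does not rely on the explicit $\Qf$ but only on \eqref{D1-Txyz} and the characteristic condition of each class. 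Both arrive at the same place with comparable effort.
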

\begin{proof}
We present the proof of the theorem in the first considered case, \ie $\M \in \F_1$.

We apply \eqref{F1-prop} in \eqref{D1-Txyz} and taking into account \eqref{strM} and \eqref{strM2}, we obtain
\begin{equation*}\label{T1-F1-xyz}
\begin{array}{l}
\Tf(x,y,z)=-\dfrac{1}{4n}\left\{
g(\ff x,\ff z)\ta(\ff y) - g(\ff y,\ff z)\ta(\ff x)\right.\\[6pt]
\phantom{\Tf(x,y,z)=-\dfrac{1}{4n}\left\{\right.}\left.
-g(x,\ff z)\ta(\ff^2 y) + g(y,\ff z)\ta(\ff^2 x)\right\}.
\end{array}
\end{equation*}
The form of $\Tf$ in case 1 follows from the last expression and \eqref{D1-T-03}.

Thus, we establish the truthfulness of the first statement in the theorem.
The other cases are proved in a similar way.
\end{proof}

Similarly to \eqref{t}, we define torsion forms $\dot{t}$, $\dot{t}^*$ and $\widehat{\dot{t}}$ for $\Tf$ with respect to a basis $\left\{\xi;e_i\right\}$ of $T_p\mathcal{M}$ $(i=1,2,\dots,2n; p\in \mathcal{M})$:
\begin{equation}\label{t1}
\begin{array}{l}
\dot{t}(x)=g^{ij}\Tf(x,e_i,e_j),\qquad \dot{t}^*(x)=g^{ij}\Tf(x,e_i,\ff e_j),\qquad
\widehat{\dot{t}}(x)=\Tf(x,\xi,\xi).
\end{array}
\end{equation}
Using \eqref{D1-Txyz}, \eqref{t1} and $\eta(e_i)=0$ $(i=1,\dots,2n)$, we obtain
\[
\begin{split}
\dot{t}(x)=-\dfrac{1}{2}g^{ij}\bigl\{&F(x,\ff_i^m e_m,e_j)-F(e_i,\ff x,e_j)+2\eta(x)F(e_i,\ff_j^m e_m,\xi)\bigr\}.
\end{split}
\]
On the one hand, by \eqref{strM} and the identities $\ff_i^k\ff_j^sg^{ij}=g^{ks}-\xi^k\xi^s$ and $\eta(e_i)=0$ $(i=1,\dots,2n)$, for the first addend of the last equality we get
\[
\begin{split}
g^{ij}F(x,\ff_i^s e_s,e_j)&=g^{ij}F(x,\ff_i^s e_s,\ff_j^m\ff_m^l
e_l)=\ff_i^s\ff_j^lg^{ij}F(x,e_s,\ff_l^m
e_m)\\[6pt]
&=g^{sl}F(x,e_s,\ff_l^m e_m)-\xi^s \xi^lF(x,e_s,\ff_l^m
e_m)=g^{ij}F(x,e_i,\ff_j^l e_l).
\end{split}
\]
On the other hand, from \eqref{F-prop}, we have for it
\[
\begin{split}
g^{ij}F(x,\ff_i^s e_s,e_j)&=-g^{ij}F(x,\ff_i^m\ff_m^s e_s,\ff_j^l
e_l)=-g^{ij}F(x,e_i,\ff_j^le_l).
\end{split}
\]
Therefore $g^{ij}F(x,\ff_i^s e_s,e_j)=g^{ij}F(x,e_i,\ff_j^le_l)=0$.

Thus, according to \eqref{t}, we obtain the following formula
\begin{equation}\label{t-fB}
\dot{t}(x)=\dfrac{1}{2}\ta(\ff x)-\ta^*(\xi)\eta(x).
\end{equation}
By an analogous approach, we calculate the form of $\dot{t}^*$ and $\widehat{\dot{t}}$ as follows:
\begin{equation}\label{t*-om-fB}
\begin{array}{l}
\dot{t}^*(x)=\dfrac{1}{2}\ta^*(\ff x)-\ta(\xi)\eta(x),\\[6pt]
\widehat{\dot{t}}(x)=\om(\ff x).
\end{array}
\end{equation}

Taking into account \eqref{ta-prop}, \eqref{t-fB} and \eqref{t*-om-fB}, we obtain the following relations between the torsion forms $\dot{t}$, $\dot{t}^*$ and the Lee forms $\ta$, $\ta^*$:
\begin{equation}\label{t1t1*}
\begin{array}{c}
\dot{t}^*\circ\ff=\dot{t}\circ\ff^2,\\[6pt]
2\dot{t}\circ\ff=\ta\circ\ff^2,\qquad
2\dot{t}\circ\ff^2=\ta\circ\ff,\\[6pt]
2\dot{t}^*\circ\ff=\ta^*\circ\ff^2,\qquad
2\dot{t}^*\circ\ff^2=\ta^*\circ\ff.
\end{array}
\end{equation}

\begin{cor}\label{thm:FiT1}
Let $\M$ be a $(2n+1)$-dimensional Riemannian $\Pi$-manifold belonging to the main classes $\F_i$ $(i=1,4,5,11)$. Then the torsion tensor $\Tf$ of the first natural connection $\Df$ is expressed by its torsion forms $\dot{t}$, $\dot{t}^*$ and $\widehat{\dot{t}}$ as follows:
\[
\begin{array}{rl}
\F_1:\; &\Tf(x,y)=-\dfrac{1}{2n}\left\{\dot{t}(\ff^2 y)\ff^2 x-\dot{t}(\ff^2 x)\ff^2 y
+\dot{t}(\ff x)\ff y-\dot{t}(\ff y)\ff x\right\}; \\[6pt]
\F_4:\; &\Tf(x,y)=-\dfrac{1}{2n}\dot{t}^*(\xi)\left\{\eta(y)\ff x-\eta(x)\ff y\right\};\\[6pt]
\F_5:\; &\Tf(x,y)=-\dfrac{1}{2n}\dot{t}(\xi)\left\{\eta(y)\ff^2 x-\eta(x)\ff^2 y\right\};\\[6pt]
\F_{11}:\; &\Tf(x,y)=\left\{\eta(y)\widehat{\dot{t}}(x) - \eta(x)\widehat{\dot{t}}(y)\right\}\xi.
\end{array}
\]
\end{cor}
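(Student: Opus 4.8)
The plan is to combine the explicit expressions for $\Tf$ in the four main classes, already obtained in \thmref{thm-T1}, with the general formulas \eqref{t-fB} and \eqref{t*-om-fB} for the torsion forms and the algebraic relations \eqref{t1t1*} between the torsion forms and the Lee forms. Concretely, every Lee-form quantity occurring in the statements of \thmref{thm-T1} is rewritten in terms of $\dot{t}$, $\dot{t}^*$ or $\widehat{\dot{t}}$, and the corollary then follows by direct substitution.

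For the class $\F_1$ I would start from
\[
\Tf(x,y)=-\dfrac{1}{4n}\left\{\ta(\ff y)\ff^2 x-\ta(\ff x)\ff^2 y+\ta(\ff^2 x)\ff y-\ta(\ff^2 y)\ff x\right\}
\]
and use the identities $\ta\circ\ff=2\dot{t}\circ\ff^2$ and $\ta\circ\ff^2=2\dot{t}\circ\ff$ from \eqref{t1t1*} to replace $\ta(\ff\,\cdot)$ and $\ta(\ff^2\,\cdot)$; the common factor $2$ turns the coefficient $\tfrac1{4n}$ into $\tfrac1{2n}$ and yields the first line of the corollary. For $\F_{11}$ the identification is immediate: the second relation in \eqref{t*-om-fB}, namely $\widehat{\dot{t}}(x)=\om(\ff x)$, is exactly what is needed to pass from $\Tf(x,y)=\{\eta(y)\om(\ff x)-\eta(x)\om(\ff y)\}\xi$ to the stated form.

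For $\F_4$ and $\F_5$ the relevant scalars are $\ta(\xi)$ and $\ta^*(\xi)$, and here I would evaluate the general formulas \eqref{t-fB} and \eqref{t*-om-fB} at $x=\xi$. Since $\ff\xi=0$ and $\eta(\xi)=1$, these give $\dot{t}(\xi)=-\ta^*(\xi)$ and $\dot{t}^*(\xi)=-\ta(\xi)$; substituting $\ta(\xi)=-\dot{t}^*(\xi)$ into the $\F_4$ formula and $\ta^*(\xi)=-\dot{t}(\xi)$ into the $\F_5$ formula of \thmref{thm-T1} produces the remaining two lines.

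There is no genuine obstacle here: the argument is a short sequence of substitutions. The only point requiring care is to apply, in each class, the correct one of the relations in \eqref{t1t1*} (distinguishing $\ta\circ\ff$ from $\ta\circ\ff^2$, and $\ta$ from $\ta^*$), and to use the \emph{general} torsion-form identities \eqref{t-fB}, \eqref{t*-om-fB} when evaluating at $\xi$ rather than any class-specific simplification; keeping track of the numerical factors $\tfrac12$ coming from those identities is what makes the coefficients in the corollary match.
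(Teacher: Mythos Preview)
Your proposal is correct and follows essentially the same approach as the paper: combine the explicit forms of $\Tf$ from \thmref{thm-T1} with the relations between the torsion forms and the Lee forms. The paper's own proof is a single sentence citing \thmref{thm-T1} and \eqref{t1t1*}; your version is in fact slightly more precise, since for $\F_4$, $\F_5$ and $\F_{11}$ the needed identities $\dot t(\xi)=-\ta^*(\xi)$, $\dot t^*(\xi)=-\ta(\xi)$ and $\widehat{\dot t}=\om\circ\ff$ come directly from \eqref{t-fB} and \eqref{t*-om-fB} rather than from the $\ff$-composed relations listed in \eqref{t1t1*}.
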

\begin{proof}
We obtain the expression of $\Tf$ using its form from \thmref{thm-T1} and the relations \eqref{t1t1*} between the torsion forms and the Lee forms.
\end{proof}

Let $\Rf$ denote the curvature tensor for the first natural connection $\Df$. Similarly to the definitions \eqref{R_xyz} and \eqref{R_xyzw} of $R$ regarding $\n$, we define $\Rf$ as a tensor of type $(1,3)$ and $(0,4)$ for $\Df$, respectively, by:
\begin{equation}\label{R1-xyz}
\Rf(x,y)z=\Df_x\Df_yz-\Df_y\Df_xz-\Df_{[x,y]}z,
\end{equation}
\begin{equation}\label{R1-xyzw}
\Rf(x,y,z,w)=g\left(\Rf(x,y)z,w\right).
\end{equation}

\begin{thm}\label{thm-R1}
Let $\M$ be a $(2n+1)$-dimensional Riemannian $\Pi$-manifold belonging to the main classes $\F_i$ $(i=1,4,5,11)$. Then the curvature tensor $\Rf$ of the first natural connection $\Df$ has the form:
\begin{enumerate}
  \item If $\M \in \F_1$, then
  \[
  \begin{array}{l}
  \Rf(x,y,z,w)=R(x,y,z,w)\\[6pt]
  \phantom{\Rf(x,y,z,w)=}+\dfrac{1}{4n}\left\{\big(g^*\owedge S_1 - g^{**} \owedge S_2\big)(x,y,z,w) \right.\\[6pt]
  \end{array}
  \]
  \[
  \begin{array}{l}
  \phantom{\Rf(x,y,z,w)=+\dfrac{1}{4n}\left\{\right.}\left.- \ta(\ff \theta^\sharp)\big( g^* \owedge g^{**} \big)(x,y,z,w) \right.\\[6pt]
  \phantom{\Rf(x,y,z,w)=+\dfrac{1}{4n}\left\{\right.}\left.
  - \ta(\ff^2 \theta^\sharp)\big(g \owedge g^{**}+g^* \owedge \tilde{g}-\tilde{g}\owedge g\big)(x,y,z,w)
  \right\},
  \end{array}
  \]
  where
  \[
  \begin{array}{l}
  S_1(x,y)=\left(\n_x \left(\ta\circ\ff^2\right)\right)(y)+\dfrac{1}{4n}\left\{\ta(\ff x)\ta(\ff^2 y)+\ta(\ff^2 x)\ta(\ff y)\right\},\\[6pt]
  S_2(x,y)=\left(\n_x \left(\ta\circ\ff\right)\right)(y)+\dfrac{1}{4n}\left\{\ta(\ff^2 x)\ta(\ff^2 y)+\ta(\ff x)\ta(\ff y)\right\};
  \end{array}
  \]
  \item If $\M \in \F_4$, then
  \[
  \begin{array}{l}
  \Rf(x,y,z,w)=R(x,y,z,w)\\[6pt]
  \phantom{\Rf(x,y,z,w)=}+\dfrac{1}{2n}\left\{x\big(\ta(\xi)\big)\left\{(\eta\otimes\eta) \owedge g^*\right\}(\xi,y,z,w)\right.\\[6pt]
  \phantom{\Rf(x,y,z,w)==+\dfrac{1}{2n}\left\{\right.}
  \left.-y\big(\ta(\xi)\big)\left\{(\eta\otimes\eta) \owedge g^*\right\}(\xi,x,z,w)\right\}\\[6pt]
  \phantom{\Rf(x,y,z,w)=}-\dfrac{1}{8n^2}\left(\ta(\xi)\right)^2 \left\{ 2(\eta\otimes\eta) \owedge g - g^*\owedge g^*\right\}(x,y,z,w);
  \end{array}
  \]
  \item If $\M \in \F_5$, then
  \[
  \begin{array}{l}
  \Rf(x,y,z,w)=R(x,y,z,w)\\[6pt]
  \phantom{\Rf(x,y,z,w)=}+\dfrac{1}{4n}\left\{x\big(\ta^*(\xi)\big)\left\{g \owedge g\right\}(\xi,y,z,w)\right.\\[6pt]
  \phantom{\Rf(x,y,z,w)=+\dfrac{1}{2n}\left\{\right.}
  \left.-y\big(\ta^*(\xi)\big)\left\{g \owedge g\right\}(\xi,x,z,w)\right\}\\[6pt]
  \phantom{\Rf(x,y,z,w)=}
  +\dfrac{1}{8n^2}\left(\ta^*(\xi)\right)^2 \left\{g \owedge g\right\}(x,y,z,w);
  \end{array}
  \]
  \item If $\M \in \F_{11}$, then
  \[
  \begin{array}{l}
  \Rf(x,y,z,w)=R(x,y,z,w)\\[6pt]
  \phantom{\Rf(x,y,z,w)=}-\left\{(\eta\otimes\eta) \owedge S_3\right\}(x,y,z,w),
  \end{array}
  \]
  where
  \[
  S_3(x,y)=(\n_x \om) (\ff y)+\om(\ff x)\om(\ff y).
  \]
\end{enumerate}
\end{thm}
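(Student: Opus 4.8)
The plan is to compute $\Rf$ directly from the definition \eqref{R1-xyz} by inserting the explicit formula for $\Df$ in each of the four main classes, which is already available from \thmref{thm-D1}. In each case write $\Df_xy=\n_xy+\Qf(x,y)$ with $\Qf$ the class-specific potential read off from \thmref{thm-D1}; then the standard expansion gives
\begin{equation*}
\Rf(x,y)z=R(x,y)z+\left(\n_x\Qf\right)(y,z)-\left(\n_y\Qf\right)(x,z)+\Qf\bigl(x,\Qf(y,z)\bigr)-\Qf\bigl(y,\Qf(x,z)\bigr).
\end{equation*}
Lowering the last index with $g$ and using \eqref{R1-xyzw} turns this into an identity for the $(0,4)$-tensors. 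So the proof reduces to (i) differentiating $\Qf$ covariantly, (ii) forming the quadratic term $\Qf(x,\Qf(y,z))$, and (iii) recognising the resulting expression as $R$ plus a combination of Kulkarni--Nomizu products of the tensors $g$, $g^*=g(\cdot,\ff\cdot)$, $g^{**}=g(\ff\cdot,\ff\cdot)$, $\g$, $\eta\otimes\eta$ and the auxiliary symmetric tensors $S_1,S_2,S_3$.

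The routine but bulky part is step (i): for $\F_1$ one must differentiate $x\mapsto -\frac{1}{4n}\{\ta(\ff y)\ff^2x-\ta(\ff^2y)\ff x+g(x,\ff y)\ff^2\theta^\sharp-g(\ff x,\ff y)\ff\theta^\sharp\}$, which produces terms $\left(\n_x(\ta\circ\ff^2)\right)(y)$, $\left(\n_x(\ta\circ\ff)\right)(y)$, together with terms $F(x,\ff\cdot,\cdot)$ coming from $\n_x\ff$ and $F(x,\cdot,\xi)$ from $\n_x\eta$; but since $\M\in\F_1$ every such $F$ is itself expressible through $\ta$ via \eqref{F1-prop}, so these feed back into $\ta$-quadratic terms and into the $\theta^\sharp$-norm scalars $\ta(\ff\theta^\sharp)$, $\ta(\ff^2\theta^\sharp)$. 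The antisymmetrisation in $x,y$ then kills the symmetric pieces of $\n(\ta\circ\ff^2)$ and $\n(\ta\circ\ff)$ only partially — it is exactly the antisymmetric combination that assembles into $g^*\owedge S_1-g^{**}\owedge S_2$ once one notes, using \eqref{strM}--\eqref{strM2} and \eqref{ta-prop}, the symmetry of $S_1$ and $S_2$ in their two arguments (which must be checked: the $\n$-part is not obviously symmetric, and symmetry follows from $\dd\ta$-type identities valid in $\F_1$, or more simply from the fact that $\Rf$ is a curvature tensor and hence the would-be antisymmetric part of $S_i$ cannot survive). For $\F_4$ and $\F_5$ the computation is lighter because $\Qf$ is linear in $\eta$ and $\ta(\xi)$ (resp. $\ta^*(\xi)$), so the derivative splits into a $\n\eta$-part — again rewritten through $F$, hence through $\ta(\xi)$ or $\ta^*(\xi)$ — plus a $x(\ta(\xi))$-part, giving the first-order terms with $(\eta\otimes\eta)\owedge g^*$ or $g\owedge g$, while $\Qf(x,\Qf(y,z))$ is the source of the $(\ta(\xi))^2$ and $(\ta^*(\xi))^2$ terms. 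For $\F_{11}$, $\Qf(x,y)=-\eta(x)\{\om(\ff y)\xi-\eta(y)\ff\om^\sharp\}$; here $\Qf(x,\Qf(y,z))$ contributes $\om(\ff x)\om(\ff y)$-type pieces and $\n\Qf$ contributes $(\n_x\om)(\ff y)$, both carrying an $\eta(x)\eta(y)$ prefactor, so everything collapses into a single $-(\eta\otimes\eta)\owedge S_3$ term with $S_3(x,y)=(\n_x\om)(\ff y)+\om(\ff x)\om(\ff y)$.

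The main obstacle is bookkeeping in the $\F_1$ case: one must carefully track how the covariant derivatives of $\ff^2x$, $\ff x$ and of $\theta^\sharp$ interact, repeatedly substitute \eqref{F1-prop} for every curvature-irrelevant $F$-term, and then reorganise a dozen or so tensor monomials in $g,g^*,g^{**},\g$ into the four Kulkarni--Nomizu blocks displayed in the statement — in particular verifying the slightly unusual combination $g\owedge g^{**}+g^*\owedge\g-\g\owedge g$ multiplying $\ta(\ff^2\theta^\sharp)$, which arises from collapsing $\ff^2$-insertions via $\ff^2=I-\eta\otimes\xi$. A useful consistency check at the end of each case is to contract with $g^{ij}$ in the appropriate slots and confirm that one recovers the Ricci-tensor and scalar-curvature relations that the paper presumably states next; and to check, in $\F_4$ and $\F_5$, that setting $\ta(\xi)$ (resp. $\ta^*(\xi)$) to a constant reproduces the known curvature identities for those subclasses. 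The other cases then follow "in a similar way", as in the proofs of \thmref{thm-D1} and \thmref{thm-T1}.
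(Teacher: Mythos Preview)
Your proposal is correct and follows essentially the same approach as the paper: derive the general relation
\[
\Rf(x,y,z,w)=R(x,y,z,w)+(\n_x\Qf)(y,z,w)-(\n_y\Qf)(x,z,w)+g(\Qf(x,z),\Qf(y,w))-g(\Qf(y,z),\Qf(x,w)),
\]
substitute the class-specific potential $\Qf$ (the paper uses the $(0,3)$-form \eqref{Q1-F1-xyz} for the covariant-derivative terms and the $(1,2)$-form \eqref{Q1-F1-xy} for the quadratic terms), and then reorganise the output into Kulkarni--Nomizu products using \eqref{strM}, \eqref{strM2} and \eqref{g***}; the paper, like you, carries out only the $\F_1$ case in full and declares the remaining three ``similar''. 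Your digression on verifying the symmetry of $S_1,S_2$ is not needed for the argument as stated---the identity is established by direct expansion, not by appeal to the curvature-like properties of $\owedge$---so you can drop that step.
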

\begin{proof}
We present the proof of the theorem in the first considered case, \ie $\M \in \F_1$.

Using \eqref{R1-xyz} and \eqref{R1-xyzw} together with \eqref{1-D1}, \eqref{2.2-D1} and the analogous relation of \eqref{1b} for $\Qf$, we get the following form of $\Rf$ for an arbitrary Riemannian $\Pi$-manifold $\M$:
\begin{equation}\label{R1}
\begin{array}{l}
\Rf(x,y,z,w)=R(x,y,z,w)+(\n_x\Qf)(y,z,w)-(\n_y\Qf)(x,z,w)\\[6pt]
\phantom{\Rf(x,y,z,w)=R(x,y,z,w)}+g(\Qf(x,z),\Qf(y,w))-g(\Qf(y,z),\Qf(x,w)).
\end{array}
\end{equation}

Taking into account \eqref{Q1-F1-xyz}, \eqref{F} and \eqref{F-prop}, we obtain
\begin{equation}\label{R11-dok1}
\begin{array}{l}
\left(\n_x\Qf\right)(y,z,w)=-\dfrac{1}{4n}\left\{x\left(\ta(\ff^2w)\right)g(y,\ff z)+x\left(\ta(\ff z)\right)g(\ff y,\ff w)\right.\\[6pt]
\phantom{\left(\n_x\Qf\right)(y,z,w)=-\dfrac{1}{4n}\left\{\right.}\left.
-x\left(\ta(\ff w)\right)g(\ff y,\ff z)-x\left(\ta(\ff^2 z)\right)g(y,\ff w)\right.\\[6pt]
\phantom{\left(\n_x\Qf\right)(y,z,w)=-\dfrac{1}{4n}\left\{\right.}\left.
+\ta(\ff^2w)F(x,y,z)-\ta(\ff^2z)F(x,y,w)\right.\\[6pt]
\phantom{\left(\n_x\Qf\right)(y,z,w)=-\dfrac{1}{4n}\left\{\right.}\left.
-\ta(\ff w)\left\{F(x,y,\ff z)+F(x,z,\ff y)\right\}\right.\\[6pt]
\phantom{\left(\n_x\Qf\right)(y,z,w)=-\dfrac{1}{4n}\left\{\right.}\left.
+\ta(\ff z)\left\{F(x,y,\ff w)+F(x,w,\ff y)\right\}\right.\\[6pt]
\phantom{\left(\n_x\Qf\right)(y,z,w)=-\dfrac{1}{4n}\left\{\right.}\left.
-\ta(\ff^2 \n_xw)g(y,\ff z)+\ta(\ff \n_xw)g(\ff y,\ff z)\right.\\[6pt]
\phantom{\left(\n_x\Qf\right)(y,z,w)=-\dfrac{1}{4n}\left\{\right.}\left.
+\ta(\ff^2 \n_xz)g(y,\ff w)-\ta(\ff \n_xz)g(\ff y,\ff w)\right\}.
\end{array}
\end{equation}

Then, using \eqref{Q1-F1-xy}, we get:
\begin{equation}\label{R11-dok2}
\begin{array}{l}
g\left(\Qf(x,z),\Qf(y,w)\right)\\[6pt]
\phantom{g\left(\right)}=-\dfrac{1}{16n^2}\left\{\vphantom{\ta(\ff^2 \theta^\sharp)}
\ta(\ff z)\left[\ta(\ff w)g(\ff x,\ff y)-\ta(\ff^2 w)g(\ff x,y)\right.\right.\\[6pt]
\phantom{g\left(\right)=-\dfrac{1}{16n^2}\left\{\ta(\ff z)\left[\right.\right.}
\left.+\ta(\ff^2 x)g(y,\ff w)-\ta(\ff x)g(\ff y,\ff w)\right]\\[6pt]
\phantom{g\left(\right)=-\dfrac{1}{16n^2}\;}
\left.-\ta(\ff^2 z)\left[\ta(\ff w)g(x,\ff y)-\ta(\ff^2 w)g(\ff x,\ff y)\right.\right.\\[6pt]
\phantom{g\left(\right)=-\dfrac{1}{16n^2}\left\{-\ta(\ff^2 z)\left[\right.\right.}
+\ta(\ff x)g(y,\ff w)\left.-\ta(\ff^2 x)g(\ff y,\ff w)\right]\\[6pt]
\phantom{g\left(\right)=-\dfrac{1}{16n^2}\;}
\left.+g(x,\ff z)\left[\ta(\ff^2 y)\ta(\ff w)-\ta(\ff y)\ta(\ff^2 w)\right.\right.\\[6pt]
\phantom{g\left(\right)=-\dfrac{1}{16n^2}\;+g(x,\ff z)\;}
\left.+\ta(\ff^2 \theta^\sharp)g(y,\ff w)-\ta(\ff \theta^\sharp)g(\ff y,\ff w)\right]\\[6pt]
\phantom{g\left(\right)=-\dfrac{1}{16n^2}\;}
\left.-g(\ff x,\ff z)\left[\ta(\ff y)\ta(\ff w)-\ta(\ff^2 y)\ta(\ff^2 w)\right.\right.\\[6pt]
\phantom{g\left(\right)=-\dfrac{1}{16n^2}\;-g(\ff x,\ff z)\;}\left.\left.
+\ta(\ff \theta^\sharp)g(y,\ff w)-\ta(\ff^2 \theta^\sharp)g(\ff y,\ff w)\right]\right\}.
\end{array}
\end{equation}

Applying \eqref{R11-dok1} and \eqref{R11-dok2} into \eqref{R1} and using \eqref{strM} and \eqref{strM2} as well as the notations \eqref{g***} and \eqref{KulNom}, we obtain the form of $\Rf$ presented in the theorem.

Thus, we establish the truthfulness of the first statement in the theorem.
The other cases are proved in a similar way.
\end{proof}

Similarly to the definitions \eqref{ro}, \eqref{tau} \eqref{ro*-tau*} for $\rho$, $\tau$, $\rho^*$ and $\tau^*$ regarding $R$, we define the corresponding ones with respect to $\Rf$ as follows:
\begin{equation*}\label{ro-tau-1}
\begin{array}{ll}
\dot{\rho}(x,y)=g^{ij}\Rf(e_{i},x,y,e_{j}),\qquad
&\dot{\tau}=g^{ij}\dot{\rho}(e_{i},e_{j}),\\[6pt]
\dot{\rho}^*(x,y)=g^{ij}\Rf(e_{i},x,y,\ff e_{j}),\qquad
&\dot{\tau}^*=g^{ij}\dot{\rho}^*(e_{i},e_{j}).
\end{array}
\end{equation*}

\begin{cor}\label{cor-R1}
Let $\M$ be a $(2n+1)$-dimensional Riemannian $\Pi$-manifold belonging to the main classes $\F_i$ $(i=1,4,5,11)$. Then the following relations for the Ricci tensors and the scalar curvatures with respect to $\Df$ and $\n$ hold:
\begin{enumerate}
  \item If $\M \in \F_1$, then
  \[
  \begin{array}{l}
  \dot{\rho}(y,z)=\rho(y,z)\\[6pt]
  \phantom{\dot{\rho}(y,z)=}+\dfrac{1}{2}\left\{\left(\n_y \left(\ta\circ\ff\right)\right)(z)+\dfrac{1}{4n}\left\{\ta(\ff^2 y)\ta(\ff^2 z)
  +\ta(\ff y)\ta(\ff z)\right\}\right\} \\[6pt]
  \phantom{\dot{\rho}(y,z)=}-\dfrac{1}{4n}\left\{\left( \Div(\ta\circ\ff^2)-\dfrac{4n^2-4n-1}{2n}\,\ta(\ff \theta^\sharp)\right.\right.\\[6pt]
  \phantom{\dot{\rho}(y,z)=-\dfrac{1}{4n}\left\{\right.\left(\right.\Div(\ta\circ\ff^2)-aaa\;}\left.
  +2(n-1)\,\ta(\ff^2 \theta^\sharp) \vphantom{\dfrac{a^1}{b}}\right) g(y,\ff z)\\[6pt]
  \phantom{\dot{\rho}(y,z)=-\dfrac{1}{4n}\left\{\right.}\left.
  -\left(\Div(\ta\circ\ff)+\dfrac{8n^2-8n+1}{2n}\,\ta(\ff^2 \theta^\sharp)\right) g(\ff y,\ff z)\right\},\\[6pt]
  \dot{\rho}^*(y,z)=\rho^*(y,z)\\[6pt]
  \phantom{\dot{\rho}^*(y,z)=}-\dfrac{1}{2}\left\{\left(\n_y \left(\ta\circ\ff^2\right)\right)(z)+\dfrac{1}{4n}\left\{\ta(\ff y)\ta(\ff^2 z)+\ta(\ff^2 y)\ta(\ff z)\right\}\right\} \\[6pt]
  \phantom{\dot{\rho}^*(y,z)=}+\dfrac{1}{4n}\left\{\left( \Div^*(\ta\circ\ff)+\dfrac{(2n-1)^2}{2n}\,\ta(\ff \theta^\sharp)\right.\right.\\[6pt]
  \phantom{\dot{\rho}^*(y,z)=+\dfrac{1}{4n}\left\{\right.\left(\right.\Div^*(\ta\circ\ff)aaaaaaa\;}\left.
  -2(n-1)\,\ta(\ff^2 \theta^\sharp) \vphantom{\dfrac{a^1}{b}}\right) g(\ff y,\ff z)\\[6pt]
  \phantom{\dot{\rho}^*(y,z)=+\dfrac{1}{4n}\left\{\right.}\left.
  -\left(\Div^*(\ta\circ\ff^2)-\dfrac{8n^2-8n-1}{2n}\,\ta(\ff^2 \theta^\sharp)\right) g(y,\ff z)\right\},\\[6pt]
  \dot{\tau}=\tau+\Div (\ta\circ\ff) + \dfrac{(2n-1)^2}{2n}\,\ta(\ff^2 \theta^\sharp),\\[6pt]
  \dot{\tau}^*=\tau^*+(n-1)\,\ta(\ff \theta^\sharp) -\dfrac{2n-3}{2}\,\ta(\ff^2 \theta^\sharp),
  \end{array}
  \]
  where $\Div (\ta)=g^{ij}\left(\n_{e_i}\ta\right)(e_j)$, $\Div^* (\ta)=g^{ij}\left(\n_{e_i}\ta\right)(\ff e_j)$;
  \item If $\M \in \F_4$, then
  \[
  \begin{array}{l}
  \dot{\rho}(y,z)=\rho(y,z)\\[6pt]
  \phantom{\dot{\rho}(y,z)=}-\dfrac{1}{2n}\left\{\xi\big(\ta(\xi)\big)g(y,\ff z) - \ff y\big(\ta(\xi)\big)\eta(z)\right\}\\[6pt]
  \phantom{\dot{\rho}(y,z)=}+\dfrac{1}{2n^2}\,\left(\ta(\xi)\right)^2\left\{g(y,z)+(n-1)\eta(y)\eta(z)\right\},\\[6pt]
  \dot{\rho}^*(y,z)=\rho^*(y,z)\\[6pt]
  \phantom{\dot{\rho}^*(y,z)=}+\dfrac{1}{2n}\left\{\ff^2 y\big(\ta(\xi)\big)-2n\,y\big(\ta(\xi)\big)\right\}\eta(z)\\[6pt]
  \phantom{\dot{\rho}^*(y,z)=}-\dfrac{2n-1}{4n^2}\,\left(\ta(\xi)\right)^2g(y,\ff z),\\[6pt]
  \dot{\tau}=\tau+\dfrac{1}{2n}\,\left(\ta(\xi)\right)^2,\\[6pt]
  \dot{\tau}^*=\tau^*-\xi\big(\ta(\xi)\big);
  \end{array}
  \]
\item If $\M \in \F_5$, then
  \[
  \begin{array}{l}
  \dot{\rho}(y,z)=\rho(y,z)\\[6pt]
  \phantom{\dot{\rho}(y,z)=}-\dfrac{1}{2n}\left\{\xi\big(\ta^*(\xi)\big)g(y,z) + (2n-1)\,y\big(\ta^*(\xi)\big)\eta(z)\right\}\\[6pt]
\phantom{\dot{\rho}(y,z)=}-\dfrac{1}{2n}\,\left(\ta^*(\xi)\right)^2 g(y,z),\\[6pt]
  \end{array}
  \]
  \[
  \begin{array}{l}
  \dot{\rho}^*(y,z)=\rho^*(y,z)\\[6pt]
  \phantom{\dot{\rho}^*(y,z)=}-\dfrac{1}{2n}\left\{\ff y\big(\ta^*(\xi)\big)\eta(z)\right\}+\dfrac{1}{4n^2}\,\left(\ta^*(\xi)\right)^2 g(y,\ff z),\\[6pt]
  \end{array}
  \]
  \[
  \begin{array}{l}
  \dot{\tau}=\tau-2\xi\big(\ta^*(\xi)\big)-\dfrac{2n+1}{2n}\,\big(\ta^*(\xi)\big)^2,\\[6pt]
  \dot{\tau}^*=\tau^*;
  \end{array}
  \]
  \item If $\M \in \F_{11}$, then
  \[
  \begin{array}{l}
  \dot{\rho}(y,z)=\rho(y,z)\\[6pt]
  \phantom{\dot{\rho}(y,z)=}
  + (\n_y \om) (\ff z)+\om(\ff y)\om(\ff z)\\[6pt]
  \phantom{\dot{\rho}(y,z)=}
  +\left\{\Div^*(\om)+\om(\ff^2 \omega^\sharp)\right\}\eta(y)\eta(z),\\[6pt]
  \dot{\rho}^*(y,z)=\rho^*(y,z)\\[6pt]
  \phantom{\dot{\rho}^*(y,z)=}
  +\left\{\Div(\om)+\om(\ff \omega^\sharp)\right\}\eta(y)\eta(z),\\[6pt]
  \dot{\tau}=\tau+2\left\{\Div^*(\om)+\om(\ff^2 \omega^\sharp)\right\},\\[6pt]
  \dot{\tau}^*=\tau^*+\left\{\Div(\om)+\om(\ff \omega^\sharp)\right\},
  \end{array}
  \]
  where $\Div (\om)=g^{ij}\left(\n_{e_i}\om\right)(e_j)$, $\Div^* (\om)=g^{ij}\left(\n_{e_i}\om\right)(\ff e_j)$;
\end{enumerate}
\end{cor}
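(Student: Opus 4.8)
The plan is to derive all four Ricci and scalar-curvature formulas by tracing the curvature identities established in \thmref{thm-R1} and then contracting with the inverse metric. Since \thmref{thm-R1} already expresses $\Rf$ as $R$ plus an explicit sum of Kulkarni--Nomizu products (and, in the $\F_1$ and $\F_{11}$ cases, certain $(0,2)$-tensors $S_i$ built from covariant derivatives of the Lee forms), the task reduces to computing two traces of each such $(0,4)$-tensor: first the contraction $g^{ij}(\cdot)(e_i,y,z,e_j)$ giving $\dot\rho$, and then a further contraction giving $\dot\tau$; and similarly the $\ff$-twisted contractions $g^{ij}(\cdot)(e_i,y,z,\ff e_j)$ for $\dot\rho^*$ and $\dot\tau^*$. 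The key computational lemma is the trace of a Kulkarni--Nomizu product: for symmetric $(0,2)$-tensors $S,P$ one has $g^{ij}(S\owedge P)(e_i,y,z,e_j) = (\tr_g S)\,P(y,z) + (\tr_g P)\,S(y,z) - S(y,z_P)\dots$ -- more precisely $g^{ij}(S\owedge P)(e_i,y,z,e_j) = (\tr_g P)S(y,z)+(\tr_g S)P(y,z) - (S\cdot P)(y,z) - (P\cdot S)(y,z)$ where $(S\cdot P)(y,z)=g^{ij}S(y,e_i)P(e_j,z)$; I would record this identity once, together with its $\ff$-twisted analogue using $g^{ij}(\cdot)(e_i,y,z,\ff e_j)$ and the symmetry $g(\ff x,y)=g(x,\ff y)$ from \eqref{strM2}.

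With that lemma in hand, each case is a bookkeeping exercise. First I would fix an adapted orthonormal-type basis $\{\xi;e_i\}$ and record the elementary traces of the building blocks appearing in \thmref{thm-R1}: $\tr_g g = 2n+1$, $\tr_g g^* = 0$ (by $\tr\ff=0$ from \eqref{strM}), $\tr_g g^{**} = 2n$, $\tr_g \tilde g = n+1$, $\tr_g(\eta\otimes\eta)=1$, together with the products $g^*\cdot g^{**}$, $g\cdot g^{**}$, $(\eta\otimes\eta)\cdot g^*$, etc., all of which collapse using \eqref{strM}, \eqref{strM2} and $\ff^2 = I-\eta\otimes\xi$. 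For the $\F_1$ case I also need $\tr_g S_1$ and $\tr_g S_2$, which by definition of $S_1,S_2$ produce the divergence terms $\Div(\ta\circ\ff^2)$, $\Div(\ta\circ\ff)$ and quadratic Lee-form terms like $\ta(\ff\theta^\sharp)$, $\ta(\ff^2\theta^\sharp)$; the identities \eqref{ta-prop} relating $\ta\circ\ff$, $\ta\circ\ff^2$, $\ta^*\circ\ff$, $\ta^*\circ\ff^2$ are what convert between the starred and unstarred divergences $\Div$, $\Div^*$. For $\F_4$, $\F_5$, $\F_{11}$ the coefficients $\ta(\xi)$, $\ta^*(\xi)$, $\om$ are treated as functions/forms, so terms like $x(\ta(\xi))$ survive the trace and reorganize into the stated $\xi(\ta(\xi))$, $\ff y(\ta(\xi))$ expressions after contracting the $\eta\otimes\eta$ factor against the basis.

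I would carry out the cases in increasing order of complexity: $\F_{11}$ first (only one $\owedge$ term, $(\eta\otimes\eta)\owedge S_3$, and the $\eta\otimes\eta$ factor makes most traces trivial, leaving $\Div^*(\om)$ and $\om(\ff^2\omega^\sharp)$ in $\dot\rho$ via the $S_3$ trace, and $\Div(\om)$, $\om(\ff\omega^\sharp)$ in $\dot\rho^*$ via the $\ff$-twisted trace of $S_3$, using $(\n_y\om)(\ff z)$ and \eqref{ta-prop}-type relations for $\om$); then $\F_5$ and $\F_4$, which involve $g\owedge g$, $g^*\owedge g^*$, $(\eta\otimes\eta)\owedge g$, $(\eta\otimes\eta)\owedge g^*$, and the derivative terms $x(\ta(\xi))$, $x(\ta^*(\xi))$; and finally $\F_1$, which is the genuinely laborious one. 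The main obstacle is precisely the $\F_1$ computation: the curvature formula there is a combination of six distinct Kulkarni--Nomizu products ($g^*\owedge S_1$, $g^{**}\owedge S_2$, $g^*\owedge g^{**}$, $g\owedge g^{**}$, $g^*\owedge\tilde g$, $\tilde g\owedge g$) with non-constant coefficients, and after tracing one must carefully repackage a large number of quadratic Lee-form terms and divergence terms --- keeping track of the exact rational coefficients such as $\tfrac{4n^2-4n-1}{2n}$, $\tfrac{8n^2-8n+1}{2n}$, $\tfrac{(2n-1)^2}{2n}$ --- using \eqref{ta-prop} and the Bianchi-type symmetry of $S_1,S_2$ to cancel the pieces that must cancel. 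I expect no conceptual difficulty beyond this; it is the accurate coefficient arithmetic, together with correctly exploiting \eqref{ta-prop} to reconcile $\Div$ with $\Div^*$, that requires care. Once $\dot\rho$ and $\dot\rho^*$ are established in each class, $\dot\tau$ and $\dot\tau^*$ follow by one further contraction, again using the elementary traces listed above, and the proof is complete in each of the four cases.
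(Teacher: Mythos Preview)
Your approach is essentially identical to the paper's: the paper's proof simply says to take the curvature formula for $\Rf$ from \thmref{thm-R1} in each class, contract by $g^{ij}$ over $x=e_i$, $w=e_j$ (respectively $w=\ff e_j$) using \eqref{strM} and \eqref{strM2} to obtain $\dot\rho$ (respectively $\dot\rho^*$), and then contract once more for $\dot\tau$, $\dot\tau^*$. Your plan is a more explicit version of exactly this, organized around the trace of a Kulkarni--Nomizu product; one small slip to fix is $\tr_g\tilde g = \tr\ff + g(\xi,\xi) = 1$, not $n+1$.
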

\begin{proof}
We present the proof of the theorem in the first considered case, \ie $\M \in \F_1$.

Using \eqref{strM} and \eqref{strM2}, we easily compute $\dot{\rho}$ as the trace of $\Rf(x,y,z,w)$, given in \thmref{thm-R1} (1), by $g^{ij}$ for $x=e_i$ and $w=e_j$.

Similarly, we calculate the trace of $\Rf(x,y,z,w)$ by $g^{ij}$ for $x=e_i$ and $w=\ff e_j$ and we obtain the form of $\dot{\rho}^*$, again taking into account \eqref{strM} and \eqref{strM2}.

Finally, the values of $\dot{\tau}$ and $\dot{\tau}^*$ are obtained by calculating the traces of $\dot{\rho}(y,z)$ and $\dot{\rho}^*(y,z)$ by $g^{ij}$ for $y=e_i$ and $z=e_j$.

Thus, we establish the truthfulness of the first statement in the corollary.
The other cases are proved in a similar way.
\end{proof}

\section{Example}\label{sect-4}

In this section, we consider a known example of a Riemannian $\Pi$-manifold of dimension 5, recalling some obtained results for it and presenting new ones related to the studied theory.

The authors of \cite{ManVes18} studied the so-called {paracontact almost paracomplex Rie\-man\-nian manifolds}, which are Riemannian $\Pi$-manifolds having the property $2g(x,\ff y)=(\nabla _x\eta)( y)+(\nabla _y\eta)(x)$.

According to the classification of the considered manifolds from \cite{ManSta01}, we denote by ${\F_4}'$ a subclass of $\F_4$ which is defined by the condition $\ta(\xi)=-2n$.
It is important to note that ${\F_4}'$ and ${\F_0}$ are subclasses of ${\F_4}$ but without common elements.

A paracontact almost paracomplex Riemannian manifold having the additional condition $\ff x=\n_x \xi$
is called a {para-Sasakian paracomplex Riemannian manifold} and it belongs to the class ${\F_4}'$ \cite{ManVes18}.

In \cite{IvMan2}, the same class of manifolds is obtained by a cone construction of a paraholomorphic paracomplex Riemannian manifold. There they are called para-Sasaki-like paracontact paracomplex Riemannian manifolds.

Let us consider a Lie group $\mathcal{G}$ of dimension $5$ (\ie $n=2$) which has a basis of left-invariant vector fields $\{e_0,\dots, e_{4}\}$ and the corresponding Lie algebra is defined for $\lm,\, \mu\in\R$ by the following commutators
\begin{equation}\label{comEx1}
\begin{array}{ll}
[e_0,e_1] = \lm e_2 - e_3 + \mu e_4,\qquad
&[e_0,e_2] = - \lm e_1 - \mu e_3 - e_4,\\[6pt]
[e_0,e_3] = - e_1  + \mu e_2 + \lm e_4,\qquad
&[e_0,e_4] = -\mu e_1 - e_2 - \lm e_3.
\end{array}
\end{equation}
The defined Lie group $\mathcal{G}$ is equipped with an invariant Riemannian $\Pi$-structure $(\phi, \xi, \eta, g)$ as follows:
\begin{equation}\label{strEx1}
\begin{array}{l}
\xi=e_0, \quad \ff  e_1=e_{3},\quad  \ff e_2=e_{4},\quad \ff  e_3=e_{1},\quad \ff  e_4=e_{2}, \\[6pt]
\eta(e_1)=\eta(e_2)=\eta(e_3)=\eta(e_4)=0,\quad \eta(e_0)=1, \\[6pt]
g(e_0,e_0)=g(e_1,e_1)=g(e_2,e_2)=g(e_{3},e_{3})=g(e_{4},e_{4})=1, \\[6pt]
g(e_i,e_j)=0,\quad i,j\in\{0,1,\dots,4\},\; i\neq j.
\end{array}
\end{equation}
It is proved that the constructed manifold $(\mathcal{G}, \phi, \xi, \eta, g)$ is a para-Sassaki-like paracontact paracomplex Riemannian manifold, \ie $(\mathcal{G}, \phi, \xi, \eta, g) \in \F_4$ \cite{IvMan2}.

Using \eqref{comEx1}, \eqref{strEx1} and the well-known Koszul equality regarding $g$ and $\n$, we calculate the components of the Levi-Civita connection and the nonzero ones of them are the following:
\begin{equation}\label{n-ex1}
\begin{array}{c}
\begin{array}{ll}
\n_{e_0} e_1 = \lm e_2+\mu e_4,\quad & \n_{e_1}e_0 = e_3,\\[6pt]
\n_{e_0} e_2 = -\lm e_1-\mu e_3, \quad & \n_{e_2} e_0 = e_4,\\[6pt]
\n_{e_0} e_3 = \mu e_2 + \lm e_4,\quad & \n_{e_3} e_0 = e_1, \\[6pt]
\n_{e_0} e_4 = -\mu e_1 - \lm e_3, \quad & \n_{e_4}e_0 = e_2,
\end{array}
\\[6pt]
\begin{array}{c}\\[-6pt]
\n_{e_1}e_3 = \n_{e_2} e_4 = \n_{e_3} e_1 = \n_{e_4}e_2 = - e_0.
\end{array}
\end{array}
\end{equation}

Taking into account \eqref{comEx1}, \eqref{strEx1} and \eqref{n-ex1}, we calculate the components $R_{ijkl}=R(e_i,e_j,e_k,e_l)$, $\rho_{ij} =\rho(e_i,e_j)$ and $\rho^*_{ij}=\rho^*(e_i,e_j)$ as well as the values of $\tau$ and $\tau^*$. The nonzero ones of them are determined by the following equalities and their well-known symmetries and antisymmetries:
\begin{equation}\label{R-ex1}
\begin{array}{l}
R_{0101}=R_{0202}=R_{0303}=R_{0404}=R_{1331}=R_{2442}=R_{1234}=R_{1432}=1,\\[6pt]
\rho_{00}=-4, \qquad \rho^*_{13}=\rho^*_{24}=-3, \qquad \tau=-4.
\end{array}
\end{equation}

Let us consider the first natural connection $\Df$ on $(\mathcal{G},\allowbreak{}\ff,\xi,\eta,g)$ defined by \eqref{D1}. Then, by the relation between $\Df$ and $\n$ in the case of $\F_4$ from \thmref{thm-D1}, and using \eqref{n-ex1}, we obtain the components of $\Df$. The nonzero ones of them are the following:
\begin{equation}\label{Df-ex1}
\begin{array}{l}
\Df_{e_0}e_1=\lm e_2+\mu e_4, \qquad
\Df_{e_0}e_2=-\lm e_1-\mu e_3, \\[6pt]
\Df_{e_0}e_3=\mu e_2+\lm e_4, \qquad
\Df_{e_0}e_4=-\mu e_1-\lm e_3.
\end{array}
\end{equation}

\begin{prop}\label{ex1-plosko}
The Riemannian $\Pi$-manifold $\GG$ has a flat first natural connection $\Df$, \ie $\Rf=0$.
\end{prop}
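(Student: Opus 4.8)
The plan is to read off $\Rf$ directly from its definition \eqref{R1-xyz}, evaluated on the left-invariant frame $\{e_0,e_1,e_2,e_3,e_4\}$, using the components of $\Df$ listed in \eqref{Df-ex1} together with the commutators \eqref{comEx1}. Two observations reduce the computation to almost nothing. First, \eqref{Df-ex1} shows that the only possibly nonzero covariant derivatives of frame fields are $\Df_{e_0}e_1,\dots,\Df_{e_0}e_4$, and each of these lies in the horizontal distribution $\HH=\ker\eta=\Span\{e_1,e_2,e_3,e_4\}$; in particular $\Df_{e_b}X=0$ for every $b\in\{1,2,3,4\}$ and every left-invariant vector field $X$, while $\Df_{e_0}e_0=0$. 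Second, \eqref{comEx1} shows that every commutator of frame fields is horizontal: $[e_0,e_k]\in\HH$ for $k=1,\dots,4$ and $[e_i,e_j]=0$ for $i,j\in\{1,\dots,4\}$.

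Granting these facts, fix frame indices $a,b,c$ and examine the three terms of $\Rf(e_a,e_b)e_c=\Df_{e_a}\Df_{e_b}e_c-\Df_{e_b}\Df_{e_a}e_c-\Df_{[e_a,e_b]}e_c$. The last term vanishes, since $[e_a,e_b]$ is a combination of $e_1,\dots,e_4$ and $\Df_{e_j}e_c=0$ for $j\in\{1,\dots,4\}$. For the first two terms: if $a=b$ their difference is identically zero; if exactly one of $a,b$ equals $0$, say $a=0$ and $b\neq 0$, then $\Df_{e_b}e_c=0$, so $\Df_{e_a}\Df_{e_b}e_c=0$, while $\Df_{e_a}e_c=\Df_{e_0}e_c$ is either zero or lies in $\HH$, and hence $\Df_{e_b}\Df_{e_a}e_c=0$ because $\Df_{e_b}$ annihilates the whole frame (the case $b=0$, $a\neq 0$ being identical after using $\Rf(e_a,e_b)e_c=-\Rf(e_b,e_a)e_c$); and if $a,b\in\{1,\dots,4\}$ then both $\Df_{e_b}e_c$ and $\Df_{e_a}e_c$ vanish. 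In every case all three terms vanish, so $\Rf(e_a,e_b)e_c=0$; since $\{e_0,e_1,e_2,e_3,e_4\}$ is a frame and $\Rf$ is tensorial, we conclude $\Rf=0$.

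I do not expect a genuine obstacle here; the argument is essentially bookkeeping, and the only points needing care are the two structural observations above --- namely that $\Df$ differentiates frame fields only in the $\xi$-direction and then produces only horizontal vectors, and that the bracket of frame fields never has a $\xi$-component --- both of which are immediate from \eqref{Df-ex1} and \eqref{comEx1}. If one prefers to avoid the structural phrasing, the same conclusion follows by tabulating $\Df_{e_i}\Df_{e_j}e_k$ and $\Df_{[e_i,e_j]}e_k$ over the finitely many index triples and observing the term-by-term cancellation, exactly as in the explicit computations carried out earlier in the paper.
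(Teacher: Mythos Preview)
Your proof is correct and follows the same approach as the paper, which simply states that using \eqref{Df-ex1} and \eqref{R1-xyz} one checks that all components of $\Rf$ vanish. Your structural observations---that $\Df_{e_b}$ kills every frame field for $b\neq 0$, that $\Df_{e_0}e_c$ is a constant-coefficient horizontal combination of frame fields, and that every bracket $[e_a,e_b]$ has no $e_0$-component---are exactly what makes that verification immediate, and they organize the case check more cleanly than a brute-force tabulation.
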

\begin{proof}
Using \eqref{Df-ex1} and \eqref{R1-xyz}, we establish that the components of $\Rf$ vanish. Thus we prove the assertion.
\end{proof}

\begin{cor}\label{ex1-cor}
The Riemannian $\Pi$-manifold $\GG$ is Ricci-flat and scalar-flat with respect to the first natural connection $\Df$, \ie $\dot{\rho}=0$ and $\dot{\tau}=0$.
\end{cor}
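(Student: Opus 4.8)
The plan is to deduce this corollary directly from Proposition~\ref{ex1-plosko} together with the general definitions of the Ricci tensor and scalar curvature attached to the first natural connection. Since $\Rf = 0$ on $\GG$ by Proposition~\ref{ex1-plosko}, every component $\Rf(e_i,e_j,e_k,e_l)$ vanishes with respect to the basis $\{e_0,\dots,e_4\}$ fixed in \eqref{strEx1}.

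First I would recall the definition $\dot{\rho}(x,y)=g^{ij}\Rf(e_i,x,y,e_j)$. Evaluating this on basis vectors $x=e_k$, $y=e_l$ gives $\dot{\rho}_{kl}=g^{ij}\Rf(e_i,e_k,e_l,e_j)$, which is a sum of components of $\Rf$, each of which is zero by Proposition~\ref{ex1-plosko}. Hence $\dot{\rho}=0$. Then, since $\dot{\tau}=g^{ij}\dot{\rho}(e_i,e_j)$ is in turn a trace of $\dot{\rho}$, and $\dot{\rho}$ vanishes identically, we obtain $\dot{\tau}=0$. (One could likewise note $\dot{\rho}^*=0$ and $\dot{\tau}^*=0$ for the same reason, though only $\dot{\rho}$ and $\dot{\tau}$ are asserted in the statement.)

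Alternatively, since $\GG\in\F_4$, one may invoke Corollary~\ref{cor-R1}(2) specialized to this manifold: substituting $\ta(\xi)=-2n=-4$ (constant, as the structure is invariant, so $\xi(\ta(\xi))=\ff y(\ta(\xi))=y(\ta(\xi))=0$) into the formulas there yields $\dot{\rho}(y,z)=\rho(y,z)+\tfrac{1}{2n^2}(\ta(\xi))^2\{g(y,z)+(n-1)\eta(y)\eta(z)\}$ and $\dot{\tau}=\tau+\tfrac{1}{2n}(\ta(\xi))^2$; plugging in $\rho_{00}=-4$, $\tau=-4$ from \eqref{R-ex1} and $n=2$, $\ta(\xi)=-4$ must give zero — this provides an independent cross-check that the two approaches agree. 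However, the cleanest argument is simply that vanishing of $\Rf$ forces vanishing of all tensors built from it by contraction.

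There is essentially no obstacle here: the only point requiring a word of care is that $\dot{\rho}$ and $\dot{\tau}$ are defined purely as metric contractions of $\Rf$, so the implication $\Rf=0 \Rightarrow \dot{\rho}=0 \Rightarrow \dot{\tau}=0$ is immediate and basis-independent. I would write the proof in one or two sentences invoking Proposition~\ref{ex1-plosko} and the definitions of $\dot{\rho}$ and $\dot{\tau}$.

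\begin{proof}
By \propref{ex1-plosko} we have $\Rf=0$ on $\GG$. Since the Ricci tensor $\dot{\rho}$ of $\Df$ is defined as the contraction $\dot{\rho}(y,z)=g^{ij}\Rf(e_i,y,z,e_j)$, it follows that $\dot{\rho}=0$. Consequently its trace $\dot{\tau}=g^{ij}\dot{\rho}(e_i,e_j)$ also vanishes, \ie $\dot{\tau}=0$.
\end{proof}
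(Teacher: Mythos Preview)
Your proof is correct and follows exactly the same approach as the paper: the paper's own proof simply states that the corollary is obvious bearing in mind \propref{ex1-plosko}, and your argument spells out precisely why, namely that $\dot{\rho}$ and $\dot{\tau}$ are contractions of $\Rf$.
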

\begin{proof}
The truthfulness of the corollary is obvious bearing in mind \propref{ex1-plosko}.
\end{proof}

Taking into account \eqref{strEx1}, \eqref{R-ex1}, \eqref{KulNom}, \propref{ex1-plosko} and \corref{ex1-cor}, the presented example confirms the statements in \thmref{thm-R1} and \corref{cor-R1}.

By virtue of \eqref{strEx1}, \eqref{n-ex1}, \eqref{D1-T} and \eqref{D1-T-03}, we calculate the components $\Tf_{ijk}= \Tf(e_i,e_j,e_k)$. The nonzero ones of them are determined by the following equalities and their well-known antisymmetries:
\begin{equation}\label{T-ex1}
\begin{array}{l}
\Tf_{013}=\Tf_{031}=\Tf_{024}=\Tf_{042}=1.
\end{array}
\end{equation}

Then, using \eqref{t1} and \eqref{T-ex1}, we calculate $\dot{t}$, $\dot{t}^*$, and $\widehat{\dot{t}}$. The only nonzero one of them is:
\begin{equation}\label{t1-ex1}
\begin{array}{l}
\dot{t}^*(e_0)=4.
\end{array}
\end{equation}

The obtained results in \eqref{T-ex1} and \eqref{t1-ex1} regarding the torsion properties of the studied example confirm the assertion made in \corref{thm:FiT1} in the case of the class $\F_4$.

\vspace{6pt}

\end{document}